\documentclass[letterpaper, 10 pt, conference]{ieeeconf}
\IEEEoverridecommandlockouts
\overrideIEEEmargins
%
\usepackage[T1]{fontenc}   
\usepackage{amsmath}
\usepackage{amssymb}   
\usepackage{bbm,dsfont}
\usepackage{upgreek}
\usepackage{theorem}
\usepackage{color}
\usepackage{comment}

\theoremstyle{plain}

\newtheorem{proposition}{Proposition}
\newtheorem{corollary}{Corollary}
\newtheorem{theorem}{Theorem}
\newtheorem{lemma}{Lemma}

\usepackage{color}
\usepackage{multirow}
\usepackage{graphicx}
\usepackage{subfigure}

\newtheorem{conj}{Conjecture}

\begin{document}

\title{\Large\bf Algebraic Test for Asymptotic Stability of Periodic Orbits for Polynomial Systems}
\author{
Rafa{\l} Wisniewski and Tom N{\o}rgaard Jensen%
\thanks{The authors are with Department of Electronic Systems, Aalborg
  University, Fredrik Bajers vej 7c, DK-9220 
  Aalborg, Denmark, Email: \{raf,tnj\}@es.aau.dk}%
}

\maketitle

\begin{abstract}
We will address the problem of determining existence and asymptotic stability of a non-trivial periodic orbit in dynamical systems described by polynomial vector fields. To this end, we will lean upon the celebrated results of Borg~\cite{Borg60}, Olech and and Hartman~\cite{art:HartmanOlech.62} and newer results of Giesl~\cite{Giesl2004643}, who all employ the concept of contraction for this purpose. Importantly, we formulate a numerically tractable algebraic test. The developed algorithm is illustrated in a numerical example.
\end{abstract}

\section{Introduction}
In this paper, we develop efficient methods for determining existence and stability of critical behaviours in dynamical
systems described by polynomial vector fields. In particular, we are
interested in developing algorithms for evaluating these properties for
systems exhibiting non-trivial periodic orbits.

We strive to find sufficient and if possible also necessary conditions for asymptotic and exponential stability of periodic orbits. An indispensable property of the criterion in sight is its ability to be numerically verifiable. Obviously, the sufficiency of a criterion imposes conservativeness of the method, but also the implemented algorithm may considerably contribute to conservatism.  
To illustrate a well known situation, the Lyapunov theory provides sufficient conditions for stability of an equilibrium. Specifically, to determine asymptotic stability, one certifies whether a certain function $V:\mathbb{R}^n\to\mathbb{R}$ and its negative orbital derivative are both positive semi-definite. A case in point is that a sufficient condition for $V > 0$ is that $V$ is a sum of squares of polynomials (or colloquially  sum of squares or even SOS). In other words, checking for $V$ being an SOS is conservative as positiveness is not equivalent to being an SOS for multivariable polynomials of more than 3 indeterminates and with degree greater than 2. 
Nonetheless, if the domain of attraction of the equilibrium is a compact semialgebraic set $\{x \in \mathbb R^n|~ g(x) \geq 0\}$ then $V$ is a polynomial and according to the Putinar Stellensatz \cite{Putinar93}, the condition $V > 0$ is equivalent to the existence of a real $\epsilon > 0$ and SOSs $s_0$ and $s_1$ such that $V = \epsilon + s_0 + s_1 g $. We consider this problem numerically tractable since for a fixed degree of $V$, checking if a polynomial is SOS boils down to solving a semidefinite optimisation problem~\cite{parrilo2003semidefinite}.  


The criteria which we are interested in are ones that  can be evaluated locally in the state-space by determining certain properties of the Jacobian matrix $\frac{\partial}{\partial x} f$ of the vector field $f$. Again, to illustrate, the  Markus-Yamabe Conjecture (only true for dynamical systems of dimension $n\leq 2$) is that if $\frac{\partial}{\partial x} f(x)$ is Hurwitz for any $x \in \mathbb R^n$ then the equilibrium solution to the system (if it exists) is globally asymptotically stable (GAS). We strive to modify this criterion and extend it to study limit cycles. In general, we shall see that one needs a combination of two conditions fulfilled to draw conclusions about the existence and stability of non-trivial periodic orbits: (i) existence of an invariant set $K\subset\mathbb{R}^n$ and (ii) some contraction property of the Jacobian matrix fulfilled on $K$. In this work, we will focus on verification of the property (ii), leaving the verification of property (i) to existing numerical methods, such as those recalled in \cite{art:giesl.hafstein.13} and references therein.
As such, this work is related to algorithms determining contraction
properties in nonlinear dynamical systems \cite{art:giesl.hafstein.13}, \cite{art:parrilo.slotine.07}. 

Specifically, we will establish an algorithm for constructing a polynomial
Riemannian metric for verifying 
Borgs criterion (\cite{Borg60} and \cite{art:stenstrom.62}) on existence and stability of periodic orbits in nonlinear
dynamical systems. This is similar to \cite{Giesl2004643}, who states that a $C^1$-smooth dynamical system $\dot x = f(x)$ has an exponentially stable periodic orbit if and only if there is a Riemannian metric tensor $G$ such that 
\begin{align} 
\label{GieslCondition} 
\max \left\{ w^{\mathrm T} \left( G(x) \frac{\partial}{\partial x} f(x) + \frac{1}{2} \dot G(x) \right) w \right\} < 0
\end{align}
with $\dot G$ denoting the orbital derivative of $G$, on the semialgebraic set $S$ defined by
\begin{align*}
S \equiv \{ (x,w) \in K \times \mathbb R^{n}|~ w^{\mathrm T} G(x) w = 1,~w^{\mathrm T} G(x) f(x) = 0 \},
\end{align*} 
where $K$ is a compact, connected, positively invariant (semialgebraic) set $K$ which contains no equilibrium. 
Nonetheless, the criterion \eqref{GieslCondition} is not numerically tractable as the semialgebraic set $S$ is defined in terms of the unknown Riemannian metric $G$. Therefore, the condition \eqref{GieslCondition} in this work will be reformulated to an equivalent criterion that there exists a Riemannian metric tensor $G$ such that 
\begin{align}
\label{RafTom}
\max \left\{ w^{\mathrm T} \left( \frac{\partial}{\partial x} f(x) G(x) - \frac{1}{2} \dot G(x) \right) w \right\} < 0
\end{align}
on 
\begin{align*}
S' \equiv \{ (x,w) \in K \times \mathbb R^{n}|~ w^{\mathrm T} w = 1,~w^{\mathrm T} f(x) = 0 \}.
\end{align*} 

Notice that the definition of $S'$ does not involve the unknown metric tensor $G$. It will be shown that 
for $K = \{x \in \mathbb R^n|~q_i(x) \geq 0,i=1,...,m\}$,  the criterion \eqref{RafTom} is equivalent to existence of a polynomials $p_1$, $p_2$, and SOSs $s_1, \hdots, s_m$ such that 
\begin{align}
\label{CriterionAlgebraicModified}
- w^{\mathrm T} \left( \frac{\partial f}{\partial x} G - \frac{1}{2} \dot G \right) w + p_1(w^{\mathrm T} w -1) + p_2 w^{\mathrm T} f - \sum_{i=1}^m s_i q_i 
\end{align}
is an SOS; in \eqref{CriterionAlgebraicModified}, we have suppressed the arguments of the functions which will be done throughout the paper whenever convenient. The criterion \eqref{CriterionAlgebraicModified} can be solved by means of semidefinite optimization whose stopping criterion is the degree of the involved polynomials.
In \cite{art:parrilo.slotine.07}, the construction of
polynomial metrics is likewise addressed using SOS for determining contraction properties in
nonlinear dynamical systems described by polynomial vector
fields. This is very similar to the work addressed here. However,
\cite{art:parrilo.slotine.07} do not take periodic orbits into account.


The layout of the paper is as follows. First, we recall the formulation of the so-called
virtual dynamics (first variation) of the nonlinear system as typically encountered in
contraction theory in which the Jacobian matrix of the vector field
enters linearly. Subsequently, we recall a number of classical results that
give conditions on the Jacobian matrix, which are sufficient for
existence and asymptotic stability of non-trivial periodic orbits in the
original nonlinear system. This gives foundation for the main result of this paper - if the system is asymptotically contractible on an positive invariant set with no equilibrium, then the system has an asymptotically stable periodic orbit.  Following the account of the
necessary and sufficient conditions for establishing existence and exponential stability of a
limit cycle, we construct an SOS program which can
verify this condition numerically. Lastly, we give a numerical example
and the results of a test of the developed algorithm on this example.


\section{Preliminaries}
We consider a  nonlinear dynamical system described by the
following deterministic and time invariant ODE
\begin{equation}\label{eq:system}
\dot{x}=f(x),
\end{equation}
where $x\in U$ with
  $U$ an open subset of the Euclidean space $\mathbb{R}^n$. However, the arguments in this exposition could be extended to a Riemannian manifold following the arguments of \cite{art:HartmanOlech.62} or \cite{art:forni.sepulchre.tac.14}. The vector field $f(\cdot)$ is assumed $C^1$; hence in particular, for every initial
condition $x_0$, the solution to \eqref{eq:system} exists and is
unique. 

In contraction theory \cite{art:forni.sepulchre.tac.14}, the time evolution of virtual displacements
$\delta x\in T_x U$ ($T_x U \approx \mathbb{R}^n$ is the tangent space of $U$ at $x$) of the state $x$ at fixed time instants is examined to draw
conclusions about the temporal evolution of the system \eqref{eq:system}. The
dynamics of the virtual displacements $\delta x$ are described by the following
\begin{equation}\label{eq:firstvar}
\delta\dot{x}=\frac{\partial }{\partial x}f(x)\delta x.
\end{equation}
We see that the Jacobian matrix $J(x) \equiv \frac{\partial }{\partial
  x}f(x)$ of the vector field enters linearly in the expression of
the evolution of the virtual displacements. So the properties of the
Jacobian must be important for the temporal evolution of the virtual
displacements. In fact, the conditions that we will
investigate in this work includes properties of the Jacobian matrix.

To illustrate, we will here recall Lewis' fundamental results \cite{Lewis51} on contraction. To this end, we will first define two concepts: (i) a Finsler metric and (ii) a variation. 

(i) A function $\psi:U \times \mathbb R^n \to \mathbb R$ smooth on $U \times \mathbb R^n \setminus \{0\}$ is a Finsler metric if
\begin{itemize}

\item it is positively homogeneous of degree $1$ in the second variable ($\psi(x,\lambda v) = \lambda \psi(x,v)$ for $\lambda > 0$), 
\item $\psi(x,w) > 0$ for all $x \in U$ and $w \in \mathbb{R}^n \setminus \{0\}$, 
\item the Hessian $[H(x,v)]_{ij} \equiv \left[\frac{\partial^2 \psi(x,v)}{\partial x_i \partial x_j}\right]$ of $\psi$ is positive definite. 
\end{itemize}
Equipped with the Finsler metric, the length $L(\gamma)$ of a sufficiently regular curve $\gamma$ is 
$$
L(\gamma) \equiv \int_0^1 \psi(\gamma(t), \dot \gamma(t) )dt.
$$
In the following, we may think of $\psi(x,v) = \sqrt{g(x)(v,v)}$ for a Riemannian metric $g$ on $U$. 

(ii) For a curve $\gamma: [0, 1] \to U$, we define a variation $\Gamma(t,s) \equiv \Gamma_{\gamma}(t,s)$ of \eqref{eq:system} such that $\Gamma(0,s) = \gamma(s)$. Furthermore, for any $s \in [0, 1]$, $\Gamma(t,s)$ is the solution of \eqref{eq:system} for the initial value at $\gamma(s)$. We are ready to recall and interpret Theorem 2 in \cite{Lewis51}.

\begin{theorem}\cite[Thm~2]{Lewis51}
\label{LewisContractionTh2}
Let $\Gamma(t,s)$ be a variation of \eqref{eq:system} and $\psi$ be a Finsler metric. Suppose that on $U$ the following contraction condition holds
\begin{equation}
\label{LewisContractionCondition}
a \leq \frac{\partial \psi(x,w)}{\partial x} f(x) +  \frac{\partial \psi (x,w)}{\partial w} J(x) w \leq A
\end{equation}
for every $x$ in the image of $\Gamma$ and $w\in T_xU$ such that $\psi(x,w) = 1$, and some $a < A$.
Then
$$
L(\Gamma(0, \cdot)) e^{a t} \leq  L(\Gamma(t,\cdot))  \leq L(\Gamma(0, \cdot)) e^{A t}. 
$$

\end{theorem}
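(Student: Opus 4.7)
The plan is to differentiate $L(\Gamma(t,\cdot))=\int_0^1 \psi\bigl(\Gamma(t,s),\partial_s\Gamma(t,s)\bigr)\,ds$ with respect to $t$, bound the integrand pointwise using the hypothesis, and then invoke a Grönwall-type argument. The two ingredients that make this work cleanly are (a) the commutation of the $t$- and $s$-derivatives of the variation, which turns $\partial_t\partial_s\Gamma$ into $J(\Gamma)\,\partial_s\Gamma$, and (b) the positive $1$-homogeneity of $\psi$ in its second argument, which allows us to remove the normalisation constraint $\psi(x,w)=1$ appearing in \eqref{LewisContractionCondition}.

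First I would differentiate under the integral sign (justified by the $C^1$ smoothness of $f$ and hence of $\Gamma$ jointly in $(t,s)$, together with the smoothness of $\psi$ away from $w=0$, assuming $\partial_s\Gamma$ never vanishes along the variation). Using $\partial_t\Gamma=f(\Gamma)$ and the elementary identity $\partial_t\partial_s\Gamma=\partial_s f(\Gamma)=J(\Gamma)\,\partial_s\Gamma$, the chain rule yields
\begin{equation*}
\frac{d}{dt}L\bigl(\Gamma(t,\cdot)\bigr)=\int_0^1\left[\frac{\partial \psi}{\partial x}(\Gamma,\partial_s\Gamma)\,f(\Gamma)+\frac{\partial \psi}{\partial w}(\Gamma,\partial_s\Gamma)\,J(\Gamma)\,\partial_s\Gamma\right]ds.
\end{equation*}

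Next I would upgrade the hypothesis \eqref{LewisContractionCondition}, which is stated only on the level set $\{\psi(x,w)=1\}$, to a homogeneous estimate valid for every nonzero $w$. Since $\psi(x,\cdot)$ is positively homogeneous of degree $1$, its $x$-derivative is homogeneous of degree $1$ in $w$ and its $w$-derivative is homogeneous of degree $0$ in $w$; hence the bracketed expression in \eqref{LewisContractionCondition} is itself positively homogeneous of degree $1$ in $w$. Rescaling an arbitrary $w\neq 0$ by $\lambda=1/\psi(x,w)$ therefore gives
\begin{equation*}
a\,\psi(x,w)\;\le\;\frac{\partial \psi}{\partial x}(x,w)\,f(x)+\frac{\partial \psi}{\partial w}(x,w)\,J(x)\,w\;\le\;A\,\psi(x,w)
\end{equation*}
for all $x$ in the image of $\Gamma$ and all $w\in T_xU\setminus\{0\}$.

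Substituting this pointwise bound into the expression for $\tfrac{d}{dt}L(\Gamma(t,\cdot))$ and using the definition of $L$, I obtain the differential inequality
\begin{equation*}
a\,L\bigl(\Gamma(t,\cdot)\bigr)\;\le\;\frac{d}{dt}L\bigl(\Gamma(t,\cdot)\bigr)\;\le\;A\,L\bigl(\Gamma(t,\cdot)\bigr),
\end{equation*}
from which Grönwall's lemma (applied separately to the two one-sided inequalities) yields the claimed exponential envelopes $L(\Gamma(0,\cdot))e^{at}\le L(\Gamma(t,\cdot))\le L(\Gamma(0,\cdot))e^{At}$. The only genuinely delicate point in this outline is the regularity/non-degeneracy bookkeeping needed to differentiate $\psi(\Gamma,\partial_s\Gamma)$ under the integral when $\partial_s\Gamma$ may be small; this is handled either by assuming $\gamma$ is regular (so $\partial_s\Gamma$ stays away from $0$ for short times by continuity) or by approximating a general admissible curve by regular ones and passing to the limit using the continuity of $L$.
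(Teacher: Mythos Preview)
Your argument is correct and is precisely the standard proof of Lewis' result: differentiate the length functional along the flow, use the equality of mixed partials to convert $\partial_t\partial_s\Gamma$ into $J(\Gamma)\partial_s\Gamma$, exploit $1$-homogeneity of $\psi$ in $w$ to lift the hypothesis off the unit sphere, and conclude by Gr\"onwall. There is nothing to compare against, however: the paper merely \emph{recalls} this theorem from \cite{Lewis51} in the Preliminaries section and does not supply its own proof.
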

In short, Theorem~\ref{LewisContractionTh2} states that if the contraction condition is satisfied, the distance between trajectories of the dynamic system remains bounded for finite $t$. If $a,A<0$ then trajectories (exponentially) converge to each other. In fact, Theorem~\ref{LewisContractionTh2} explains why we use the name ``contraction'', i.e.,  the trajectories (exponentially) contract to each other. The next question is then; what is the expected behavior in the limit of a system satisfying the contraction?

For planar systems ($n=2$), the investigation of critical behaviours of
\eqref{eq:system} becomes particularly simple, so we will discuss
these first.

\section{Planar systems} 
As a first step, consider the na{\"{i}}ve interpretation of the linearised dynamics in
\eqref{eq:firstvar} as an actual linear system and that as long as the Jacobian matrix
 is Hurwitz for every $x\in \mathbb{R}^n$, then the
equilibrium solution (if it exists) of the original non-linear dynamical system
\eqref{eq:system} is globally asymptotically stable (GAS). A GAS equilibrium solution to
\eqref{eq:system} will of course rule out the existence of non-trivial
periodic orbits.

This na\"{i}ve interpretation was formulated for
nonlinear time invariant systems in the so-called
Markus-Yamabe Conjecture (MYC) in \cite{art:markus.yamabe.1960} which we will recall here.
Suppose (possibly after an affine change of coordinates) that the
non-linear time invariant dynamical system \eqref{eq:system} fulfils
\begin{equation}\label{eq:autsystem}
f(0)=0\;\textnormal{and}\; x\neq 0\Leftrightarrow f(x)\neq 0.
\end{equation}
Then we have the following conjecture in \cite{art:markus.yamabe.1960}
\begin{conj}\label{conj:MYC}
Given the system \eqref{eq:system}, assume \eqref{eq:autsystem} holds
and $J(x)$ is Hurwitz for every $x\in\mathbb{R}^n$, then the
equilibrium point $x=0$ is GAS.
\end{conj}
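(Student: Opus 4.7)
Since the paper itself notes that Conjecture~\ref{conj:MYC} holds only in dimension $n\le 2$ (explicit counterexamples of Cima, van den Essen, Gasull, Hubbers and Ma\~nosas settle the negative side for $n\ge 3$), I would direct my proof at the planar case, which is the only part realistically amenable to an elementary argument. The plan has three stages: local stability at the origin, exclusion of alternative $\omega$-limit sets, and global confinement of trajectories.

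First, since $J(0)$ is Hurwitz, I would invoke Lyapunov's linearisation theorem to obtain a neighbourhood $U_0$ of the origin on which a quadratic $V(x)=x^{\mathrm T}Px$ is a strict Lyapunov function, establishing local exponential stability and an open basin of attraction for $0$.

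Second, for $n=2$ the Hurwitz property forces $\operatorname{tr}J(x)=\operatorname{div}f(x)<0$ throughout $\mathbb{R}^2$. Since the plane is simply connected, I would appeal to the Bendixson--Dulac criterion to exclude any periodic orbit: if $\gamma$ were one, enclosing a region $D$, then $f\cdot\nu\equiv 0$ on $\gamma$ (the field being tangent there) would give $\iint_D \operatorname{div} f\,dA = \oint_\gamma f\cdot\nu\,d\ell = 0$ via the divergence theorem, contradicting strict negativity. Combined with the hypothesis $x\ne 0\Leftrightarrow f(x)\ne 0$, the only equilibrium is the origin, so by the Poincar\'e--Bendixson theorem the $\omega$-limit set of every \emph{bounded} forward trajectory must reduce to $\{0\}$.

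The third stage --- showing that every trajectory is in fact bounded --- is the genuine obstacle. From $\det J(x)>0$ (automatic from the Hurwitz property in dimension two) $f$ is a local diffeomorphism, and I would try to upgrade this to global injectivity either via a Hadamard-type properness argument on $\mathbb{R}^2$ or by invoking Gutierrez's theorem that a $C^1$ planar field with pointwise Hurwitz Jacobian is injective. Injectivity, combined with local attraction at $0$, should then preclude escape to infinity, since such an escape would be incompatible with the topology of the pre-images $f^{-1}(\overline{B_r(0)})$. This is precisely the step where the argument becomes delicate and refuses to generalise: pointwise spectral data about $J(x)$ are too weak to control the global topology of orbits for $n\ge 3$, which is exactly where the known counterexamples live. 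Once boundedness is in hand, stage two closes the proof.
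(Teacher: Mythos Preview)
The paper does not prove Conjecture~\ref{conj:MYC}; it is quoted as background, with the planar case delegated to \cite{art:fessler.1995,art:glutsuk.1995,art:gutierrez.1995,art:chen.et.al.2001} and the failure for $n\ge 3$ to \cite{book:essen,thesis:hubbers}. There is therefore no in-paper argument to compare against, so let me assess your sketch on its own merits.

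Stages~1 and~2 are fine. Hurwitzness of $J(0)$ gives local asymptotic stability; in dimension two the Hurwitz condition forces $\operatorname{tr}J(x)=\operatorname{div}f(x)<0$ and $\det J(x)>0$, so Bendixson--Dulac excludes closed orbits, and since the unique equilibrium is a sink (hence admits no homoclinic loop), Poincar\'e--Bendixson forces every bounded forward orbit to converge to $0$.

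Stage~3 contains a genuine gap. You correctly identify boundedness as the crux and point to injectivity of $f$, but the bridge you propose --- that escape to infinity would be ``incompatible with the topology of the pre-images $f^{-1}(\overline{B_r(0)})$'' --- is not an argument: injectivity of the map $f$ says nothing directly about orbits of $\dot x=f(x)$, and no amount of staring at pre-images of balls will produce a trapping region. The correct link is Olech's 1963 theorem: for a planar $C^1$ field with $f(0)=0$, $\operatorname{tr}J<0$ and $\det J>0$ everywhere, injectivity of $f$ already implies that the origin is globally asymptotically stable. That is the step you are missing. Note also that obtaining injectivity by ``invoking Gutierrez's theorem'' is not an independent route: that injectivity result \emph{is} the substantive content of the planar Markus--Yamabe theorem as proved in \cite{art:fessler.1995,art:glutsuk.1995,art:gutierrez.1995}, so citing it amounts to outsourcing the entire difficulty rather than supplying an elementary proof.
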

Conjecture \ref{conj:MYC} was in Markus and Yamabes paper \cite{art:markus.yamabe.1960}
attributed to Aizerman \cite{art:aizerman.49}. However, in later works
on the subject, the conjecture is called the Markus-Yamabe Conjecture
or the Jacobian Conjecture on asymptotic stability.
Conjecture \ref{conj:MYC} is trivially true when $n=1$ and was proven in \cite{art:markus.yamabe.1960}
for the special case in which the system is triangular.

In the mid nineties, it was proven that in the special
case of $n=2$, MYC is true and at least four different proofs of MYC
for $n=2$ exists
\cite{art:fessler.1995,art:glutsuk.1995,art:gutierrez.1995,art:chen.et.al.2001}.
Around the same time, the
conjecture was ultimately proven false for systems of dimension three
and above ($n\geq 3$) where counter examples have been
found. An excellent account of the process involved in finding these
counter examples is provided in \cite{book:essen}. This process
involved finding closed form solutions to dynamical systems described
by vector fields with Hurwitz Jacobian. A number of these counter examples
are found in \cite{thesis:hubbers}. 
{Consequently, if $n\leq 2$, the equilibrium solution exists and it can be shown that the Jacobian of
$f(\cdot)$ is Hurwitz for every $x\in \mathbb{R}^n$ then one can rule out the
existence of non-trivial periodic solutions to \eqref{eq:system}.
However, if $n\geq 3$ this is no longer sufficient.}

To infer existence of non-trivial critical
behaviours in a planar system one can turn ones attention to
invariant sets in which no equilibrium solution is found. This is
described in the well known Poincar\'{e}-Bendixson Theorem (see
e.g. \cite{book:hirsch.et.al}). The
theorem shows that the compact limit sets of planar systems either
consists of equilibria or of closed orbits.
The following corollary of the Poincar\'{e}-Bendixson Theorem is particularly useful when
determining the existence of limit cycles in planar systems.
\begin{corollary}
A compact set $K$ that is positively or negatively invariant contains
either a limit cycle or an equilibrium point.
\end{corollary}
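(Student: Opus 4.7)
The plan is to reduce the corollary to a direct application of the Poincaré–Bendixson Theorem via limit sets. First I would pick any point $x_0 \in K$ and consider its $\omega$-limit set $\omega(x_0)$ in the positively invariant case (and its $\alpha$-limit set $\alpha(x_0)$ in the negatively invariant case). Since $K$ is positively invariant and compact, the forward trajectory $\{\phi_t(x_0):t\geq 0\}$ is contained in $K$, so by standard compactness arguments $\omega(x_0)$ is a non-empty, compact, connected, and (forward/backward) invariant subset of $K$.

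Next, I would invoke the Poincaré–Bendixson Theorem as stated in the excerpt: because the system is planar and $\omega(x_0)$ is a non-empty compact invariant set, $\omega(x_0)$ either contains an equilibrium point of $f$ or coincides with a closed orbit. In the first case, we obtain an equilibrium in $\omega(x_0)\subset K$, which gives one alternative of the corollary. In the second case, $\omega(x_0)$ is itself a periodic orbit contained in $K$, which gives the other alternative.

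The negatively invariant case is handled symmetrically by considering the time-reversed system $\dot x = -f(x)$, for which $K$ is positively invariant while equilibria and periodic orbits are preserved; alternatively, one replaces $\omega(x_0)$ by $\alpha(x_0)$ throughout and repeats the argument.

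The main obstacle is essentially already absorbed into the Poincaré–Bendixson Theorem itself: the planarity of the phase space and the use of transversals underlying the trichotomy of limit sets are doing the real work, and what remains is the bookkeeping step of producing a non-empty $\omega$- or $\alpha$-limit set inside $K$ to which the theorem can be applied. One minor point worth stating carefully is that ``limit cycle'' is interpreted here in the loose sense of a closed orbit, so that the degenerate possibility that $x_0$ itself already lies on a periodic orbit is subsumed under the periodic-orbit alternative.
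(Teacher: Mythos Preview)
Your proposal is correct and matches the paper's approach: the paper does not actually write out a proof but simply records the statement as a corollary of the Poincar\'e--Bendixson Theorem, and what you outline---passing to the $\omega$- (or $\alpha$-) limit set of a point in $K$ and applying Poincar\'e--Bendixson to that nonempty compact invariant set---is precisely the standard derivation the paper is implicitly invoking.
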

As we can see from the corollary above, a compact invariant set $K$ of the
system which does not contain an equilibrium point will contain a
limit cycle. Furthermore, solutions of the system starting in $K$ will
approach this limit cycle. Lastly, the following corollary
states that with the existence of a closed orbit in a
planar system also follows the existence of an equilibrium point.
\begin{corollary}\label{cor:pbt2}
Let $\gamma$ be a closed orbit that form the boundary of an open set
$U$. Then $U$ contains an equilibrium point.
\end{corollary}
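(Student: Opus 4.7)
The plan is to deduce the corollary from the Poincar\'e--Bendixson theorem by a minimal-orbit argument. By the Jordan curve theorem, the simple closed curve $\gamma$ separates $\mathbb R^{2}$ into two open connected components, one bounded and one unbounded; since the conclusion is vacuous for the unbounded component (which need not contain any equilibrium), I take $U$ to be the bounded component. Uniqueness of solutions to \eqref{eq:system} (ensured by $f\in C^{1}$) prevents any trajectory from crossing $\gamma$, so both $U$ and $\bar U$ are invariant under the flow and $\bar U$ is compact.

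Assume toward a contradiction that $U$ contains no equilibrium. Since $\gamma$ is a nontrivial periodic orbit, $\gamma$ itself contains no equilibrium, so $\bar U$ is altogether equilibrium-free. Consider the family $\mathcal F$ of all closed orbits of \eqref{eq:system} contained in $\bar U$; it is nonempty because $\gamma\in\mathcal F$, and each $\gamma'\in\mathcal F$ bounds a region of positive Lebesgue measure in $\bar U$. Let
\begin{equation*}
a_{0} \;=\; \inf\{\mathrm{area}(V'):\gamma'\in\mathcal F\},
\end{equation*}
where $V'$ is the bounded component of $\mathbb R^{2}\setminus\gamma'$. A Hausdorff-limit argument applied to a minimising sequence, combined with the previous corollary to guarantee that the nested intersection still contains a closed orbit, produces a minimiser $\gamma^{\star}\in\mathcal F$ whose bounded interior $V^{\star}$ contains no further closed orbit of~\eqref{eq:system}.

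Pick any point $p\in V^{\star}$. Since $V^{\star}$ is invariant and compactly contained in $\bar U$, the $\omega$-limit set $\omega(p)$ is a nonempty compact invariant subset of $\bar V^{\star}$ and, by Poincar\'e--Bendixson, is either a closed orbit or an equilibrium. The hypotheses on $V^{\star}$ rule out every possibility except $\omega(p)=\gamma^{\star}$. The symmetric argument in backward time forces $\alpha(p)=\gamma^{\star}$ as well. This is incompatible with the transversal-section lemma underlying Poincar\'e--Bendixson: a trajectory accumulating on a closed orbit $\gamma^{\star}$ from one side must meet any small transversal to $\gamma^{\star}$ in a strictly monotone sequence of points, so it cannot accumulate on $\gamma^{\star}$ from the same side in both forward and backward time. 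The contradiction shows that $U$ must contain an equilibrium.

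The main obstacle is making the area-minimising step rigorous, since it requires controlling Hausdorff limits of nested closed curves and checking that the limit is again a closed orbit rather than some degenerate invariant continuum; this can be handled by repeated application of Poincar\'e--Bendixson to points in the limit set. An alternative and arguably cleaner route is the Poincar\'e index theorem: the index of a $C^{1}$ vector field along a periodic orbit equals $+1$ and coincides with the sum of the indices of the enclosed equilibria, so at least one enclosed equilibrium must exist; however, that approach relies on index theory which is not developed elsewhere in the paper.
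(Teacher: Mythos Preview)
The paper does not actually supply a proof of this corollary; it is stated without argument as a standard consequence of the Poincar\'e--Bendixson theorem, with an implicit reference to the textbook of Hirsch et al. There is therefore no ``paper proof'' to compare your attempt against.

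Evaluated on its own merits, your argument follows the standard route and is correct in outline. The one step that genuinely needs more care is the existence of a minimal closed orbit, as you acknowledge. Rather than minimising area and controlling Hausdorff limits, the cleaner device is Zorn's lemma: partially order the closed orbits contained in $\bar U$ by inclusion of their closed bounded interiors; for any chain, the intersection of those closed interiors is a nonempty compact connected invariant set containing no equilibrium, so by the preceding corollary it contains a closed orbit, which furnishes a lower bound for the chain. Zorn then produces a minimal $\gamma^{\star}$, and your Poincar\'e--Bendixson\,/\,monotone-transversal contradiction for a point $p$ in its interior goes through exactly as you wrote it. Your area-minimisation phrasing can be made rigorous, but the Zorn version avoids the delicate limit analysis entirely.

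Your closing remark is also apt: the Poincar\'e index argument (the index of $f$ along a periodic orbit is $+1$ and equals the sum of indices of enclosed equilibria) is the shortest proof, and is the one most textbooks actually give; it is simply orthogonal to the Poincar\'e--Bendixson machinery the paper is invoking here.
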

The above corollary has the consequence that in $\mathbb{R}^2$ only
invariant sets $K$ which are not simply connected can be without
equilibria. So to infer existence of non-trivial periodic orbits one
has to investigate invariant sets which are not simply connected.
The last corollary also implies that any planar system in which the Jacobian matrix $J(x)$ is
Hurwitz for every point $x$ cannot have a periodic solution. 

The
reasoning is as follows. Suppose the planar system has a periodic
solution, then by Corollary \ref{cor:pbt2} it also has an equilibrium
solution. However, if the Jacobian is Hurwitz for every point $x$,
then this equilibrium is GAS, which rules out the existence of a
periodic solution. Thus we have a contradiction. 

As we have seen in this section, existence of non-trivial periodic
orbits in planar systems ($n=2$) can be ruled out if the Jacobian
matrix is Hurwitz for every point $x\in \mathbb{R}^2$. Furthermore, existence of non-trivial periodic orbits in
planar systems is characterised by non-simply connected invariant sets which
does not contain equilibria. 

For the
general case when $n>2$ these results are no longer valid. Instead, one
can use a combination of invariant sets and conditions on the Jacobian
matrix of the vector field similar to that in the MYC to infer
existence (and stability) of closed orbits. This is described in Borg's
Theorem.

\section{From planar systems to $\mathbb{R}^n$}
In his paper \cite{Borg60}, Borg gives sufficient conditions
for the existence and stability of closed orbits for a dynamical
system in $n$-dimensional Euclidean space. Among the conditions given in his theorem, we find the existence of a
bounded set $U$ in which the symmetric part of the Jacobian of the system is stable in all
directions orthogonal to the vector field. Additional conditions
are boundedness of the norm of the Jacobian and the vector
field itself and non-existence of equilibria. We recall Borg's Theorem here.
\begin{theorem}
\label{BorgsTeorem}{\cite{Borg60}}
Let $U\subset\mathbb{R}^n$ be connected, bounded and open and $\delta,\delta_1,k>0$ be
scalar constants. Assume that
\begin{align}
&0<\delta\leq ||f(x)||\leq k \label{NoEquilibrium}\\
&||J(x)||\leq k \label{BorgsBounded} \\
\begin{split}\label{eq:borg2}
&\left\langle w,J(x) w\right\rangle\leq -\delta_1||w||^2\\
&\textnormal{for all $x \in U$ and $w \in \mathbb{R}^n$ such that}\\
&\left\langle f(x),w\right\rangle=0
\end{split}
\end{align}
Furthermore, assume that there exists a solution
$\phi(t)$ of \eqref{eq:system} such that $\phi(t)\in U$ for every
$t\in[0,\infty)$.

Then the system \eqref{eq:system} has a unique orbitally stable\footnote{see e.g. \cite{book:jordan.smith} Definition 8.1 for a definition of orbital stability} periodic
solution contained in $U$. This solution is the limit cycle of $\phi(t)$. 
\end{theorem}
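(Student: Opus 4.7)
The plan is to prove the theorem via a transverse contraction argument combined with a Poincar\'e first-return map. Hypothesis \eqref{eq:borg2} is the infinitesimal contraction statement in the normal bundle to the flow, and hypothesis \eqref{NoEquilibrium} guarantees that normal hyperplanes to $f$ are genuine codimension-one transversals. First I would exploit the fact that $\phi(t)\in U$ for all $t\ge0$ to produce a non-empty $\omega$-limit set $\Omega\subset\overline U$, and fix a point $p\in\Omega$ together with a small disk $\Sigma\subset p+f(p)^\perp$ transverse to the flow; by compactness of $\overline U$ and the bound $\|f\|\le k$, the first-return map $P:\Sigma_0\to\Sigma$ will be well-defined on a smaller disk $\Sigma_0$.

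The core analytic step is to promote \eqref{eq:borg2} to contraction for the variational equation \eqref{eq:firstvar} projected transversely to $f$. Along an arbitrary solution $x(t)$, decompose a virtual displacement as $\delta x=\alpha f(x)+w$ with $\langle f(x),w\rangle=0$, and set
\begin{equation*}
w(t)=\delta x(t)-\frac{\langle f(x(t)),\delta x(t)\rangle}{\|f(x(t))\|^2}f(x(t)).
\end{equation*}
A direct computation using $\delta\dot x=J(x)\delta x$ and $\frac{d}{dt}f(x)=J(x)f(x)$ gives $\dot w=J(x)w-\dot\alpha\,f(x)$, so that $\langle w,\dot w\rangle=\langle w,J(x)w\rangle\le-\delta_1\|w\|^2$ by \eqref{eq:borg2}. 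Hence $\|w(t)\|\le\|w(0)\|e^{-\delta_1 t}$, which is the transverse contraction.

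Next I would use this to show that $P$ is a strict contraction in a suitable norm on $\Sigma_0$. Given two starting points $q_1,q_2\in\Sigma_0$, join them by a smooth path in $\Sigma_0$ and flow the whole path; its pointwise tangent vectors evolve by the variational equation. Decomposing into $f$-parallel and transverse parts as above, the transverse length of the flowed path decreases at rate $\delta_1$, while the bounds $\|J\|\le k$ and $\delta\le\|f\|\le k$ in \eqref{NoEquilibrium}, \eqref{BorgsBounded} ensure that the return time stays in a bounded interval and that one can control the final projection back onto $\Sigma$. Choosing the section small enough so that the return time to $\Sigma$ is at least some $\tau_0>0$ uniformly, one obtains $\mathrm{dist}_\Sigma(P(q_1),P(q_2))\le Ce^{-\delta_1\tau_0}\mathrm{dist}_\Sigma(q_1,q_2)$ with $Ce^{-\delta_1\tau_0}<1$ after, if necessary, iterating $P$. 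The Banach fixed-point theorem yields a unique fixed point $q^\ast$, whose orbit is a closed periodic orbit $\gamma\subset U$; iterates $P^n(q)\to q^\ast$ for every $q\in\Sigma_0$ translate to orbital convergence of nearby trajectories to $\gamma$, which is precisely orbital stability, and also forces $\Omega=\gamma$ so that $\gamma$ is the limit cycle of $\phi$. Uniqueness of a periodic orbit in $U$ follows because any second periodic orbit would provide a second $\omega$-limit set, contradicting the global transverse contraction (all nearby sections would contract to the same $q^\ast$).

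The main obstacle is the middle step: passing from the infinitesimal, pointwise inequality \eqref{eq:borg2} to a genuine Lipschitz contraction of the Poincar\'e map on a finite transverse disk. One has to verify that return times to $\Sigma$ are uniformly bounded above and below, that the projection from the flowed path back to $\Sigma$ does not destroy the exponential decay of the transverse length, and that the whole argument remains valid on a positively invariant neighbourhood. This is where \eqref{NoEquilibrium} and \eqref{BorgsBounded} are essential, and it is the step where the technical book-keeping (rather than any deep idea) is concentrated.
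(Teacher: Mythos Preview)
The paper does not supply a proof of this theorem; it is quoted as a classical result and attributed directly to Borg's 1960 paper, so there is no in-paper proof against which to compare your attempt.

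That said, your outline is essentially the classical route underlying Borg's result and its later generalisations by Stenstr\"om, Hartman--Olech and Giesl, and the core computation is correct: from $\delta x=\alpha f+w$ with $\langle f,w\rangle=0$ one indeed gets $\dot w=J w-\dot\alpha f$, whence $\tfrac{d}{dt}\|w\|^2=2\langle w,Jw\rangle\le-2\delta_1\|w\|^2$ by \eqref{eq:borg2}, giving the transverse exponential decay. The passage from this infinitesimal estimate to a contraction of the first-return map, which you correctly flag as the main obstacle, is precisely where Borg uses the uniform bounds \eqref{NoEquilibrium} and \eqref{BorgsBounded} to control return times and the re-projection onto the section. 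Two points worth tightening: first, the $\omega$-limit set lies a priori only in $\overline U$, so you need either continuity of $f,J$ up to $\partial U$ or an argument (available from the contraction itself) that $\phi$ eventually keeps a positive distance from $\partial U$; second, the assertion that $P$ is well-defined on $\Sigma_0$ requires more than compactness and $\|f\|\le k$---it needs that nearby orbits actually return, which follows from $p\in\Omega$ together with continuous dependence on initial data, not merely from boundedness.
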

Furthermore, Borg sharpens the theorem for the particular case where the set $U$ is invariant, as expressed in the following theorem.

\begin{theorem}
Let $U$ be a region where the conditions \eqref{NoEquilibrium}, \eqref{BorgsBounded} and \eqref{eq:borg2} holds and any solution $\phi(t)$ of \eqref{eq:system} with $\phi(t_0)$ belonging to the boundary $\partial U$ for some $t_0$ satisfies: $\dot{\phi}(t_0)$ has a positive projection on the inner normal (supposed existing) of $\partial U$ at the point $\phi(t_0)$.

Then $U$ contains precisely one periodic orbit which is also the limit cycle of all solutions in $U$ (for $t\to\infty$). 
\end{theorem}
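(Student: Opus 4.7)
The plan is to reduce this theorem to Theorem~\ref{BorgsTeorem} by showing that the inner-normal hypothesis makes $U$ positively invariant, and then applying Theorem~\ref{BorgsTeorem} along every solution starting in $U$; this both removes the need for an assumed forward-complete solution and strengthens ``limit cycle of $\phi$'' to ``limit cycle of all solutions in $U$''.

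First, I would prove positive invariance. Fix $x_0 \in U$ and let $\phi$ be the solution of \eqref{eq:system} with $\phi(0) = x_0$. Suppose for contradiction that $\phi$ leaves $U$ in finite time, and set $t_0 = \inf\{t > 0 : \phi(t) \notin U\}$. Since $U$ is open, continuity yields $\phi(t_0) \in \partial U$ and $\phi(t) \in U$ for all $t < t_0$. Locally near $\phi(t_0)$ choose a $C^1$ defining function $\rho$ with $\rho > 0$ on $U$, $\rho = 0$ on $\partial U$, and $\nabla \rho$ a positive multiple of the inner normal. The hypothesis that $f(\phi(t_0))$ has positive projection on the inner normal then translates to $\frac{d}{dt}\rho(\phi(t))|_{t = t_0} > 0$. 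Hence $\rho(\phi(t_0 - \epsilon)) < \rho(\phi(t_0)) = 0$ for all sufficiently small $\epsilon > 0$, i.e.\ $\phi(t_0 - \epsilon) \notin U$, contradicting $\phi(t) \in U$ for $t < t_0$. Therefore $\phi(t) \in U$ for all $t \geq 0$.

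Second, positive invariance means that every solution starting in $U$ meets the standing hypothesis of Theorem~\ref{BorgsTeorem}, which yields a unique orbitally stable periodic orbit $\gamma \subset U$ that is the limit cycle of $\phi$. Since $\gamma$ is intrinsic to $U$ and does not depend on the choice of $x_0$, it is simultaneously the limit cycle of \emph{every} trajectory with initial data in $U$. To upgrade ``unique orbitally stable periodic orbit'' to ``precisely one periodic orbit'', note that any periodic orbit $\gamma' \subset U$ is its own $\omega$-limit set; by what has just been established, that $\omega$-limit equals $\gamma$, so $\gamma' = \gamma$.

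The main obstacle is the positive-invariance step at non-smooth points of $\partial U$: the hypothesis already presumes enough regularity of $\partial U$ for an inner normal to make sense, so if $\partial U$ is at least piecewise $C^1$ the local defining function $\rho$ exists and the argument above goes through face by face, with corners handled by combining the inward-pointing conditions of the adjacent smooth pieces. Once invariance is established, the remaining conclusions follow immediately from Theorem~\ref{BorgsTeorem}.
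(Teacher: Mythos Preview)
The paper does not prove this theorem; it is quoted from Borg~\cite{Borg60} and only paraphrased afterwards (``That is, when $U$ is positively invariant\dots''). There is therefore no proof in the paper to compare against.

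Your reduction to Theorem~\ref{BorgsTeorem} is the natural one and is essentially correct: the inner-normal condition forces positive invariance of $U$, after which every forward trajectory in $U$ qualifies as the $\phi$ demanded by Theorem~\ref{BorgsTeorem}, yielding a single periodic orbit that attracts all of them. Your uniqueness upgrade (a periodic orbit is its own $\omega$-limit, hence equals $\gamma$) is also correct and is needed, since Theorem~\ref{BorgsTeorem} as stated asserts uniqueness only among orbitally stable periodic solutions. Two small points worth making explicit: (i) once positive invariance is established, boundedness of $U$ guarantees that each solution exists on $[0,\infty)$, which Theorem~\ref{BorgsTeorem} requires; (ii) in your boundary argument the sign of the derivative actually pushes $\rho$ \emph{forward} into $U$, so the cleaner contradiction is that $\rho(\phi(t_0+\epsilon))>0$ for small $\epsilon>0$, contradicting the definition of $t_0$ as the first exit time---your backward-in-time version also works but is slightly less direct.
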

That is, when $U$ is positively invariant for the vector field $f$, and the conditions \eqref{NoEquilibrium}-\eqref{eq:borg2} are fulfilled then there is a non-trivial periodic orbit in $U$ and all solutions in $U$ converges to this orbit.

Note that the condition \eqref{eq:borg2} is a contraction
condition on the Jacobian matrix; for $\psi(x,w) = ||w||$ the contraction \eqref{LewisContractionCondition} implies the condition \eqref{eq:borg2}. 
Condition \eqref{eq:borg2} require the eigenvalues
of the symmetric part $H(x)=\frac{1}{2}(J(x)+J^T(x))$ of the Jacobian to be negative in every
direction orthogonal to the vector field at every point $x$ in
the set $U$.

In a nutshell, Borg's Theorem states that if a dynamic system has no equilibrium point (Condition \eqref{NoEquilibrium}), satisfies the contraction condition and $U$ is invariant then there is a unique asymptotically stable periodic orbit.

Seen as an extension to the Poincar\'{e}-Bendixson Theorem which was
valid for planar systems, the additional condition \eqref{eq:borg2} in
Borg's Theorem that the system needs to be contracting in directions
orthogonal to the vector field seems natural. If this was not the
case, then the set $U$ might just be occupied by a strange
attractor, say. The latter type of critical behavior is of course not
possible when the system is planar, which is why the additional
condition is not needed for planar systems. As a further extension,
Stenstr\"{o}m later generalised Borg's Theorem to a general Riemannian
manifold \cite{art:stenstrom.62}. Also worth mentioning in the contents of this paper is a generalization of Borgs theorem in \cite{art:HartmanOlech.62}, where predominantly the condition \eqref{eq:borg2} is substituted by a weaker condition \eqref{OlechWeakerCondition}.
For an arbitrary scalar product $\left<\cdot,\cdot\right>$ on $\mathbb{R}^n$ let 
	\begin{equation}
	\label{eq:gamma}
	\lambda(x) \equiv \max_{|w|=1, \left<w,f(x)\right> = 0} \left<w, J(x)w\right>,	
	\end{equation} 
and for a solution $\phi(t)$ of \eqref{eq:system} and $c \in \mathbb R$, let $\Lambda(t)$ be
$$
\Lambda_{\phi,c}(t) \equiv \int_0^t \lambda(\phi(s)) ds + ct
$$
\begin{theorem}\cite[Thm 5.3]{art:HartmanOlech.62}
\label{TheoremLimitCycleOlech}
Let $U \subset \mathbb{R}^n$ be as in Theorem \ref{BorgsTeorem} and suppose that Conditions \eqref{NoEquilibrium} and \eqref{BorgsBounded} hold. Furthermore, suppose that there is a solution $\phi(t)$
that satisfies the following two conditions
\begin{enumerate}
\item There is an $l > 0$ such that the Hausdorff distance $d(\phi(t), \partial U)$ between $\phi(t)$ and the boundary of $U$ is greater than $l$. 
\item There are $c > 0$ and $C \in \mathbb R$ such that
\begin{equation}
\label{OlechWeakerCondition}
\Lambda_{\phi,c}(t) - \Lambda_{\phi,c}(s) \leq C \hbox{ for all } s, t \hbox{ with } 0 \leq s < t < \infty.
\end{equation}
\end{enumerate}
Then there is an asymptotically stable periodic orbit.
\end{theorem}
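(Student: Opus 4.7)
The plan is to mirror Borg's proof of Theorem~\ref{BorgsTeorem}, replacing the pointwise bound \eqref{eq:borg2} by the integral estimate \eqref{OlechWeakerCondition}. I would start from an arbitrary solution $\delta x(t)$ of the first variation equation \eqref{eq:firstvar} along $\phi$, and split it at each time as $\delta x(t)=\alpha(t)f(\phi(t))+w(t)$ with $\langle w(t),f(\phi(t))\rangle=0$. The parallel part $\alpha f$ only reparameterizes time and is irrelevant for comparing orbits. A direct computation (absorbing the twist term coming from $\frac{d}{dt}f(\phi(t))=J(\phi(t))f(\phi(t))$) yields
\[
\frac{1}{2}\frac{d}{dt}\|w\|^{2} \;=\; \langle w,J(\phi)w\rangle \;\leq\; \lambda(\phi(t))\,\|w\|^{2},
\]
where the inequality uses the definition \eqref{eq:gamma} of $\lambda$ together with $w\perp f(\phi)$.

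Integrating and invoking hypothesis~(2) of the theorem gives, for all $t\geq s\geq 0$,
\[
\|w(t)\| \;\leq\; \|w(s)\|\,\exp\!\bigl(\Lambda_{\phi,c}(t)-\Lambda_{\phi,c}(s)-c(t-s)\bigr) \;\leq\; e^{C}\,\|w(s)\|\,e^{-c(t-s)},
\]
so the transverse component of every first-variation solution along $\phi$ decays exponentially with rate at least $c$. I then form the $\omega$-limit set $\Omega=\omega(\phi)$: hypothesis~(1) and boundedness of $U$ make $\Omega$ a nonempty, compact, invariant subset of $U$ with $d(\Omega,\partial U)\geq l$, and \eqref{NoEquilibrium} forbids any equilibrium in $\Omega$.

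The next step is a Poincar\'e-section argument. Pick $p_{0}\in\Omega$, take a small codimension-one transversal $\Sigma\ni p_{0}$ orthogonal to $f(p_{0})$, and use $\|f\|\geq\delta$ together with \eqref{BorgsBounded} to define a first-return map $P\colon\Sigma_{0}\to\Sigma$ on a neighborhood $\Sigma_{0}\subset\Sigma$; the trajectory $\phi$ crosses $\Sigma_{0}$ at a sequence $t_{1}<t_{2}<\cdots$ with uniformly bounded return times. The decay estimate for $w$ translates, via the identification of the derivative of $P$ with the time-$(t_{k+1}-t_{k})$ flow restricted to the transverse space, into a bound $\|DP^{k}(\phi(t_{1}))\|\leq M e^{-c'k}$ for some $M,c'>0$. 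Hence $\{\phi(t_{k})\}$ is Cauchy in $\Sigma_{0}$, its limit $p^{\star}$ is a fixed point of $P$, and the orbit through $p^{\star}$ is a periodic orbit contained in $\Omega$. The same transverse contraction then propagates convergence on $\Sigma$ to convergence of trajectories starting near the periodic orbit, delivering asymptotic stability.

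The main obstacle I anticipate is precisely this last step: the hypothesis \eqref{OlechWeakerCondition} only controls the integral of $\lambda$ along the single trajectory $\phi$, not on a whole neighborhood, so the infinitesimal contraction has to be upgraded to a genuine contraction of $P$ on an open set of initial conditions. This requires continuity of the flow, \eqref{BorgsBounded}, and crucially the \emph{uniform-in-$s$} character of the bound $\Lambda_{\phi,c}(t)-\Lambda_{\phi,c}(s)\leq C$ (rather than mere boundedness of $\Lambda_{\phi,c}$) in order to transfer the estimate to nearby orbits and to rule out two periodic orbits in $\Omega$ whose mutual transverse displacement would have to contract exponentially in both time directions.
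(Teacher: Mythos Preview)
The paper does not give its own proof of this theorem; it is quoted as \cite[Thm~5.3]{art:HartmanOlech.62} and used as a black box, so there is no argument in the paper to compare your proposal against.

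For what it is worth, your outline tracks the classical Borg/Hartman--Olech strategy: the orthogonal splitting $\delta x=\alpha f+w$ with $\langle w,f\rangle=0$, the identity $\tfrac{1}{2}\tfrac{d}{dt}\|w\|^{2}=\langle w,Jw\rangle$ (the $\dot\alpha f$ term drops because $w\perp f$), Gronwall with the definition of $\Lambda_{\phi,c}$, and then an $\omega$-limit/Poincar\'e-section argument. The gap you yourself identify is the real one: hypothesis~\eqref{OlechWeakerCondition} concerns only the single trajectory $\phi$, so the contraction of the linearized transverse flow along $\phi$ must be upgraded to a contraction of the return map on an open piece of $\Sigma$ before you can conclude that $\{\phi(t_{k})\}$ is Cauchy or that $P$ has a fixed point. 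That passage from infinitesimal to finite displacements, using \eqref{NoEquilibrium}, \eqref{BorgsBounded}, and the uniformity in $s$ of the bound $\Lambda_{\phi,c}(t)-\Lambda_{\phi,c}(s)\leq C$, is exactly where the substance of Hartman and Olech's original proof lies; your sketch correctly isolates it but does not carry it out.
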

Condition~\eqref{OlechWeakerCondition} can be interpreted as: the time average of $\lambda(\cdot)$ along the solution $\phi$ shall be bounded from above, i.e., the $\lambda(\cdot)$ might be positive on a bounded time interval, but most of the time it is negative. As a concluding remark on Theorems \ref{BorgsTeorem} and \ref{TheoremLimitCycleOlech}, the scalar product $\left< \cdot, \cdot \right>$ is the matter of choice. In particular, this makes Condition \eqref{OlechWeakerCondition} numerically intractable. 

In \cite{art:HartmanOlech.62}, we also find the following extension of the conditions in the MYC which holds for $n>2$ and as a consequence rules out the existence of closed orbits.

\begin{theorem} \cite[Thm~2.1]{art:HartmanOlech.62}
\label{HartmanOlech2.1}
Consider the dynamical system \eqref{eq:system} with a $C^1$ vector field $f$. Suppose that the equilibrium point $0$ is locally asymptotically stable (e.g. if $J(0)$ is Hurwitz) and that \eqref{eq:autsystem} holds.
The point $0$ is also GAS if 
	
$$ \lambda(x) \leq 0~~~\forall_{x \in \mathbb{R}^n}, $$
where $\lambda$ is defined in \eqref{eq:gamma}.	
\end{theorem}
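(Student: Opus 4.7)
The plan is to bootstrap the local asymptotic stability of $0$ into a global statement using the transverse contraction afforded by $\lambda(x)\leq 0$. Denote by $B$ the basin of attraction of $0$; under the hypotheses, $B$ is a nonempty, open, forward-invariant subset of $\mathbb{R}^n$, so it suffices to show that $B=\mathbb{R}^n$, which I would do by showing that $B$ is also closed.

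First I would work with the first variation \eqref{eq:firstvar} along a solution $\phi(t)$. Decompose a virtual displacement as $\delta x=\alpha f(\phi)+w$ with $\langle w,f(\phi)\rangle=0$. A direct calculation shows that the evolution of the transverse magnitude $|w|$ is controlled by the quadratic form $\langle w,J(\phi)w\rangle$, which by hypothesis is bounded above by $\lambda(\phi)|w|^2\leq 0$, modulo a tangential drift that can be absorbed by reparametrizing $\delta x$ along the flow of $f$. In this way one obtains a non-expansion result for a ``transverse length'' of a curve transported by the flow: for any smooth $\gamma_0:[0,1]\to\mathbb{R}^n$, the transverse length of $\Gamma(t,\cdot)=\phi(t,\gamma_0(\cdot))$ is non-increasing in $t$.

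Next I would combine this non-expansion with the local asymptotic stability of $0$. For an arbitrary $x_0\in\mathbb{R}^n$, pick $y_0\in B$, a smooth curve $\gamma_0$ joining $y_0$ to $x_0$, and evolve it by the flow. Since $\phi(t,y_0)\to 0$ and the transverse length of $\Gamma(t,\cdot)$ remains bounded, $\phi(t,x_0)$ must approach the orbit of $y_0$ in the transverse sense. Because $B$ contains an open neighborhood of $0$ and \eqref{eq:autsystem} rules out any further equilibrium, $\phi(t,x_0)$ is eventually trapped in $B$, and forward invariance of $B$ then gives $x_0\in B$. By connectedness of $\mathbb{R}^n$, we conclude $B=\mathbb{R}^n$.

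The main obstacle, which is what makes the original proof in \cite{art:HartmanOlech.62} delicate, is the rigorous construction of a non-expansive transverse metric. The first variation does not preserve orthogonality of $\delta x$ to $f(\phi)$, so the inequality $\lambda(\phi)\leq 0$ cannot be applied pointwise to $\delta x$ itself. The remedy is to project $\delta x$ onto the orthogonal complement of $f(\phi)$ at each instant and to exploit the freedom to slide $\delta x$ along integral curves of $f$; the assumption $f(x)\neq 0$ away from $0$ is precisely what makes this projection globally well-defined and what allows the tangential drift to be absorbed into the reparametrization.
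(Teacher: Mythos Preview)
The paper does not supply its own proof of this theorem: it is quoted as \cite[Thm~2.1]{art:HartmanOlech.62} and used as a known result, so there is no in-paper argument to compare against. Your proposal should therefore be read as an attempt at the original Hartman--Olech proof rather than at anything in the present paper.

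As a roadmap, your outline is faithful to the original: one shows the basin of attraction $B$ is open and closed in $\mathbb{R}^n$, and the closedness step is driven by a transverse non-expansion estimate for the first variation, using $\lambda(x)\leq 0$ on vectors orthogonal to $f$. You also correctly isolate the genuine technical crux: the variational equation does not preserve the splitting $T_x\mathbb{R}^n = \mathrm{span}\{f(x)\}\oplus f(x)^{\perp}$, so the inequality $\langle w,J w\rangle\leq \lambda(x)|w|^2$ cannot be applied directly to $\delta x$, and one must control the tangential component via a reparametrisation or projection argument that relies on $f(x)\neq 0$ away from the origin.

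However, as written your proposal is explicitly incomplete. You name the obstacle and sketch the remedy (``project $\delta x$ onto $f(\phi)^\perp$ and absorb the tangential drift into a reparametrisation''), but you do not carry it out: there is no estimate showing that the projected component actually satisfies a differential inequality of the form $\tfrac{d}{dt}|w|^2\leq 2\lambda(\phi)|w|^2$, nor any argument bounding the reparametrisation so that the endpoint of the transported curve stays controlled. The passage from ``transverse length bounded'' to ``$\phi(t,x_0)$ eventually enters $B$'' also hides a nontrivial step, since bounded transverse distance to an orbit converging to $0$ does not by itself force convergence to $0$ without further control near the equilibrium. These are precisely the points that make the Hartman--Olech proof delicate, and your text acknowledges but does not resolve them; so the proposal is a correct plan rather than a proof.
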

At this point, we wish to attach a comment to the Markus-Yamabe conjecture. As seen above, it is generally not enough to determine if the Jacobian $J(x)$ is Hurwitz. Instead, we must study the eigenvalues of its symmetric part $H(x)$ as in \eqref{eq:gamma}. Parallel to the case of planar systems, we see that we more or less need to rule out the existence of an equilibrium point whenever the Jacobian condition (here $\lambda(x)\leq 0$ and for planar systems it was the condition that $J(x)$ is Hurwitz) is fulfilled. Otherwise, we can rule out the existence of a closed orbit. We observe that Theorem~\ref{HartmanOlech2.1} is a slight generalisation of Krasovskii's Theorem~\cite[Thm 21.1]{book:krasovskii} or \cite[Sec.30]{book:Hahn}. 

Further generalization can be established by substituting a scalar product $\left< \cdot, \cdot \right>$ by a Riemannian metric $g$. To this end, we represent $g$ on $U$ in the form $w^T G(x) w$, where $w \in T_x U \approx \mathbb{R}^n$ and $G(x)$ is positive definite on $U$. Consequently, the contraction condition becomes \cite{art:HartmanOlech.62}

\begin{align}
\label{Eq:RiemannianMetric}
	\lambda_G(x) \equiv \max_{w^{\mathrm T} G w=1, w^{\mathrm T} G f(x) = 0} w^{\mathrm T} \left( G(x) J(x)  + \frac{1}{2}  \dot G(x) \right)  w, 
\end{align}
with $G = [g_{kl}]$ and $\dot G = [f^j \partial_j g_{kl}]$, where we have used Einstein notation, i.e., the same index in the superscript and the subscript implies a summation over this index.
In the sequel, we identify the Riemannian metric $g$ with the metric tensor $G$ and even call $G$ a Riemannian metric. 

As an observant reader might have noticed most of the literature on contraction used so far stems from fifties and sixties. However, more recently Giesel \cite{Giesl2004643} formulated necessary and sufficient conditions for existence of an exponentially stable periodic orbit.


\begin{theorem}\cite[Thm 26]{Giesl2004643}
\label{LimitCycleGiesl}
The following conditions are equivalent
\begin{enumerate}
\item The system~\eqref{eq:system} has an exponentially stable periodic orbit, and the real parts of all Floquet exponents (except the trivial ones) are less than  $-c < 0$.
\item
There are a Riemannian metric $G$  and a nonempty compact, connected  subset $K$ of $\mathbb{R}^n$ that is an invariant set of the dynamical system \eqref{eq:system} containing no equilibrium such that  $\max_{x \in K}\lambda_G(x) < -c < 0$. 
\end{enumerate}
\end{theorem}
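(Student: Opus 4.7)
The plan is to prove the two implications separately, treating them as a forward statement (contraction metric implies exponential orbit) and its converse (exponential orbit implies contraction metric).

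For the direction (2)$\Rightarrow$(1), I would apply the Riemannian generalization of Borg's Theorem (the Stenstr\"om extension cited earlier) to the invariant, equilibrium-free, compact, connected set $K$ equipped with the metric $G$. The hypothesis $\max_{x\in K}\lambda_G(x) < -c < 0$ is precisely the strict contraction condition needed, so Borg/Stenstr\"om gives a unique orbitally stable periodic orbit $\gamma\subset K$ to which every solution in $K$ converges. It then remains to upgrade orbital asymptotic stability to the Floquet exponent bound. For this I would linearize \eqref{eq:system} along $\gamma$ to obtain the variational equation $\delta\dot x = J(\gamma(t))\delta x$ and study the evolution of the $G$-norm of a transverse displacement. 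Differentiating $|w(t)|_G^2 = w^{\mathrm T} G(\gamma(t)) w$ along the flow gives
\begin{equation*}
\tfrac{d}{dt}|w(t)|_G^2 = w^{\mathrm T}\bigl(G J + J^{\mathrm T} G + \dot G\bigr)w = 2\, w^{\mathrm T}\bigl(G J + \tfrac{1}{2}\dot G\bigr)w,
\end{equation*}
so for $w$ satisfying $w^{\mathrm T} G f = 0$ the right-hand side is bounded above by $-2c |w|_G^2$. Since the subspace $\{w : w^{\mathrm T} G(\gamma(t)) f(\gamma(t))=0\}$ is complementary to the trivial Floquet direction $\dot\gamma$, this yields that every non-trivial Floquet multiplier has modulus at most $e^{-cT}$, hence the real parts of the non-trivial Floquet exponents are less than $-c$.

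The direction (1)$\Rightarrow$(2) is the genuine converse and is the harder half. Given the exponentially stable orbit $\gamma$, I would first take $K$ to be a sufficiently thin tubular neighborhood of $\gamma$ contained in its basin of attraction, chosen compact, positively invariant, and equilibrium-free; the existence of such a $K$ is a consequence of exponential stability and of $\gamma$ containing no equilibrium. The metric $G$ is then constructed first along $\gamma$ using Floquet data and then extended to $K$. Concretely, I would use the Floquet factorization $\Phi(t) = P(t)e^{Bt}$ of the fundamental matrix of the variational equation along $\gamma$, split the tangent space at each $\gamma(t)$ as $\mathrm{span}\{\dot\gamma(t)\} \oplus E^s(t)$ with $E^s(t)$ the Floquet-stable complement, and define $G$ on $\gamma$ so that $f$ has prescribed norm and $E^s$ is $G$-orthogonal to $f$, with $G|_{E^s}$ a Lyapunov metric for the restriction of $B$ to $E^s$ satisfying a contraction rate strictly below $-c$. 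A standard converse-Lyapunov averaging like $G_s = \int_0^{\infty} e^{2\tilde c s}\Psi(s)^{\mathrm T}\Psi(s)\,ds$ on $E^s$ (for $c < \tilde c$ less than the Floquet decay rate) produces such a metric. I would then extend $G$ smoothly and positive-definitely to a tubular neighborhood using the normal-bundle tubular neighborhood theorem, and finally shrink $K$ until the desired strict inequality $\lambda_G(x)<-c$ holds throughout $K$ by continuity of $\lambda_G$ and its strict satisfaction on $\gamma$.

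The main obstacle is precisely this converse construction: ensuring that the metric defined along $\gamma$ from Floquet data extends to a smooth positive-definite $G$ on a full neighborhood for which the transverse contraction inequality holds uniformly. The subtlety is that the maximization in $\lambda_G$ is taken over $\{w : w^{\mathrm T} G f = 0\}$, a subspace that itself depends on $G$, so an a priori choice of transverse complement will in general differ from the $G$-orthogonal complement off $\gamma$. This requires either a careful choice of extension in adapted tubular coordinates or a small perturbation argument, exploiting the strict inequality at $\gamma$ to absorb the discrepancy as $K$ is shrunk. The remaining verifications (smoothness, positive-definiteness, and invariance of $K$) are then standard.
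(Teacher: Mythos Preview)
The paper does not prove this theorem; it is quoted verbatim as Theorem~26 of Giesl~\cite{Giesl2004643} and used as a black box for the subsequent reformulation (Proposition~\ref{Proposition:RemoveGfromConstrains} and Theorem~\ref{TheoremFinalAnylyticTest}). So there is no proof here to compare against---the argument lives entirely in the cited reference. Your outline does follow the same architecture as Giesl's original proof: Borg/Stenstr\"om for existence plus a variational estimate for the Floquet bound in the direction (2)$\Rightarrow$(1), and a Floquet-based construction of the metric on a tubular neighbourhood for (1)$\Rightarrow$(2). The converse construction you sketch is essentially the right one, and you have correctly flagged the extension off $\gamma$ as the delicate step.

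There is, however, a real gap in your (2)$\Rightarrow$(1) argument. You compute $\tfrac{d}{dt}|w|_G^2$ and then invoke the bound $\lambda_G<-c$ for vectors satisfying $w^{\mathrm T} G f = 0$. But the $G$-orthogonal complement of $f(\gamma(t))$ is \emph{not} invariant under the variational flow $\delta\dot x = J(\gamma(t))\,\delta x$: if $w(0)^{\mathrm T} G f = 0$ at $t=0$, there is no reason for $w(t)^{\mathrm T} G(\gamma(t)) f(\gamma(t))=0$ to persist, so the pointwise inequality $\tfrac{d}{dt}|w|_G^2 \le -2c|w|_G^2$ is not available along the trajectory and the Floquet conclusion does not follow as written. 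The standard repair is to track the $G$-orthogonal projection $v(t)=P(\gamma(t))w(t)$ onto the transverse bundle and verify that $|v|_G$ (not $|w|_G$) satisfies the contraction inequality; the computation uses that the tangential drift contributes a term proportional to $f$ which is killed by the projection. Equivalently, one can work with the linearized Poincar\'e map on a $G$-transversal section and bound its $G$-operator norm by $e^{-cT}$ directly from the transverse contraction hypothesis. Either route closes the gap, but it is genuine work that Giesl carries out and that your sketch currently skips.
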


We return to the main objective of this paper - to the formulation of an algebraic criterion of existence of exponentially stable periodic orbits. As seen in Theorem~\ref{LimitCycleGiesl}, such a criterion comprises the contraction and the invariance conditions. Our focus in the sequel is on the contraction condition. Nonetheless, our task is not yet reached as the Riemannian metric $G$ is not known, and the usage of SOS program together with positive stellens{\"a}tze does not allow for an unknown in the equations of constraints - inhere $w^{\mathrm T} G f = 0$. To circumvent this problem, we formulate the following proposition.

\begin{proposition}
\label{Proposition:RemoveGfromConstrains}
Let $K$ be a compact subset of an open subset $U \subseteq \mathbb R^n$. 
Then the following three conditions are equivalent
\begin{enumerate}
\item There are $c > 0$ and a Riemannian metric $G$ on $U$ such that $ \lambda_G(x)  \leq -c$ for all $x \in K$, where $\lambda_G$ is defined in \eqref{Eq:RiemannianMetric}. 
\item There are $c > 0$ and a Riemannian metric $G$ on $U$ such that $\lambda_G'(x) \leq -c$ for all $x \in K$, where $\lambda_G'$ is defined by
\begin{align*}
\hspace{-0.5cm}
\lambda_G'(x) \equiv \max_{w^{\mathrm T} w=1, w^{\mathrm T} f(x) = 0} w^{\mathrm T} \left( J(x) G(x)  - \frac{1}{2}  \dot G(x) \right)  w, 
\end{align*}
\item There are $c' > 0$ and a Riemannian metric $G$ with polynomial entries on $U$ such that $\lambda_G'(x) \leq -c'$ for all $x \in K$. 
\end{enumerate}
\end{proposition}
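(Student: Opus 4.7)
The implication (3)$\Rightarrow$(2) is immediate, since a polynomial Riemannian metric is a Riemannian metric. So the work is to prove (1)$\Leftrightarrow$(2) by an algebraic change of variables and (2)$\Rightarrow$(3) by polynomial approximation on the compact set $K$.

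For (1)$\Leftrightarrow$(2), I would perform the linear change of variables $v = G(x)w$, i.e.\ $w = G^{-1}(x)v$, inside the optimisation defining $\lambda_G$. Since $G$ is symmetric, the constraints $w^{\mathrm T}Gw=1$ and $w^{\mathrm T}Gf=0$ become $v^{\mathrm T}G^{-1}v=1$ and $v^{\mathrm T}f=0$. Using $G G^{-1}=I$, differentiation along $f$ yields the identity $\dot{G^{-1}} = -G^{-1}\dot G\, G^{-1}$, which lets one rewrite
$$
w^{\mathrm T}\!\left(GJ+\tfrac12\dot G\right)\!w \;=\; v^{\mathrm T}\!\left(J\,\tilde G-\tfrac12\dot{\tilde G}\right)\!v,\qquad \tilde G := G^{-1}.
$$
Thus $\lambda_G(x)$ equals the maximum of $v^{\mathrm T}Mv$ on $\{v^{\mathrm T}\tilde G v=1,\, v^{\mathrm T}f=0\}$ with $M:=J\tilde G-\tfrac12\dot{\tilde G}$, whereas $\lambda'_{\tilde G}(x)$ is the maximum of the same $v^{\mathrm T}Mv$ on $\{v^{\mathrm T}v=1,\, v^{\mathrm T}f=0\}$. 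Both expressions are homogeneous of degree zero in $v$ after normalising by $v^{\mathrm T}\tilde G v$ and $v^{\mathrm T}v$ respectively, so they have the same sign on the cone $\{v^{\mathrm T}f(x)=0\}$. Because $\tilde G$ is continuous and positive definite on the compact $K$, its eigenvalues are bounded between two positive constants on $K$, and a uniform bound $\leq -c$ on one of the two maxima transfers to a uniform bound $\leq -c'$ on the other, with $c'$ related to $c$ by those eigenvalue bounds. Running this argument in both directions with the involution $G\leftrightarrow G^{-1}$ closes (1)$\Leftrightarrow$(2).

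For (2)$\Rightarrow$(3), my plan is to approximate a smooth metric $G$ realising (2) by a symmetric polynomial matrix $G_\epsilon$. By the Stone--Weierstrass theorem applied entrywise, both $G$ and its partial derivatives admit uniform polynomial approximations on $K$, so a symmetric $G_\epsilon$ with $\|G_\epsilon-G\|_{C^1(K)}<\epsilon$ is available. Since $\dot G_\epsilon = f^j\partial_j G_\epsilon$ depends linearly on the first derivatives of $G_\epsilon$, and $\lambda'_{(\cdot)}(x)$ is a continuous function of its matrix argument and its derivative (being a constrained maximum of a quadratic form over a compact set of directions), one gets $\lambda'_{G_\epsilon}\to\lambda'_G$ uniformly on $K$. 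Choosing $\epsilon$ small enough forces $\lambda'_{G_\epsilon}(x)\leq -c/2$ on $K$ and, by continuity of the spectrum, preserves positive definiteness of $G_\epsilon$ on an open neighbourhood of $K$.

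\textbf{Main obstacle.} The only delicate step is ensuring that the polynomial approximant is a Riemannian metric on the entire $U$, not just on $K$. If $U$ is unbounded this is not automatic, and I would handle it by replacing $G_\epsilon$ with $G_\epsilon+\alpha I$ (or more generally $G_\epsilon+S^{\mathrm T}S$ with $S$ polynomial) to dominate the matrix globally without spoiling the strict inequality $\lambda'_{G_\epsilon}(x)\leq -c/2$ on the compact $K$; this works because adding $\alpha I$ changes $\dot G$ by zero and changes $JG$ by $\alpha J$, which is controlled by a small $\alpha$ on $K$. Alternatively, since the eventual SOS certificate \eqref{CriterionAlgebraicModified} only tests positive definiteness on $\{q_i\geq 0\}$, the global extension can be absorbed into the Putinar multipliers $s_i$; I would prefer the former, more self-contained route in the statement of the proposition itself.
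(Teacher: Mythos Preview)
Your approach is essentially the same as the paper's: the same substitution $v=G(x)w$ together with the identity $\dot{G^{-1}}=-G^{-1}\dot G\,G^{-1}$ for (1)$\Leftrightarrow$(2), and polynomial approximation (the paper simply cites Weierstrass) for (2)$\Leftrightarrow$(3). In fact you are more careful than the paper, which neither mentions the need for $C^1$-approximation nor the issue of global positive definiteness on $U$.

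One small caveat on your proposed fix: adding $\alpha I$ with \emph{small} $\alpha$ cannot ``dominate the matrix globally'' if $U$ is unbounded, since a polynomial approximant $G_\epsilon$ may have eigenvalues tending to $-\infty$ away from $K$. Your parenthetical alternative $G_\epsilon+S^{\mathrm T}S$ with a polynomial $S$ that is small on $K$ but grows fast enough at infinity is the right idea; alternatively, as the subsequent results only use the metric on $K$, one may simply shrink $U$ to a bounded neighbourhood of $K$ before approximating. The paper does not address this point at all.
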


\begin{proof}
The equivalence between 2) and 3) follows from Weierstrass Approximation Theorem. 

We show equivalence beetween 1) and 2). It will be instrumental to use the notation $G = [g_{kl}]$ and $G^{-1} = [g^{lj}]$. Let $\phi_z(t)$ be the flow line of $\dot x = f(x)$ with $\phi_z(0) = z$
Let $H = G^{-1}$ then specifically
\begin{align*}
0 = \left. \frac{\mathrm{d}}{\mathrm{d}t}\right |_{t=0}\left[H (\phi_z(t)) G(\phi_z(t)) \right]. 
\end{align*}
Hence
\begin{align*}
H(\phi_z(0)) \left. \frac{\mathrm{d}}{\mathrm{d}t}\right |_{t=0} [G(\phi_z(t))] = -\left. \frac{\mathrm{d}}{\mathrm{d}t}\right |_{t=0} [H(\phi_z(t))] G(\phi_z(0)).
\end{align*}
It follows that
\begin{align*}
H(z) [\partial_i g_{kl}(z) f^i(z)] H(z) = - [\partial_i g^{lj}(z) f^i(z)], 
\end{align*}
or in other words,
\begin{align*}
H(z) \dot G(z) H(z) = - \dot H(z). 
\end{align*}
Let $v \equiv G(x) w$ then 
$$ w^{\mathrm T} \left( G J + \frac{1}{2}  \dot G \right)  w = v^{\mathrm T} \left( J H   + \frac{1}{2} H \dot G H \right)  v$$
and hence
$$ \lambda_G''(x) \equiv \max_{v^{\mathrm T} H v=1, v^{\mathrm T} f(x) = 0} v^{\mathrm T} \left( J(x) H(x)  - \frac{1}{2}  \dot H(x) \right)  v \leq c.$$
Since $H$ is a Riemannian metric, $K$ is compact, and $\lambda_G''(x) \leq c, ~x \in K$, there is $c' > 0$ such that $\lambda_G' (x) \leq - c'$ on $K$.
Hence, the conclusion of the proposition follows.
\end{proof}

Combining Theorem~\ref{LimitCycleGiesl} with Proposition~\ref{Proposition:RemoveGfromConstrains} results in the following theorem 

\begin{theorem}
\label{TheoremFinalAnylyticTest}
Let $G$ be a Riemannian metric on an open subset $U$ of  $\mathbb R^n$. Let $K \subset U$ be a non-empty, compact, connected and positively invariant set of the dynamical system \eqref{eq:system}, which contains no equilibrium. Suppose that $\lambda_G'(x) < -c'$ for some $c' > 0$ for all $x \in K$, where $\lambda_G'(x)$ is given by
\begin{align}
\label{Eq:AlgebraicFormulationContraction}
\lambda_G'(x) \equiv \max_{w^{\mathrm T} w=1, w^{\mathrm T} f(x) = 0} w^{\mathrm T} \left( J(x) G(x)  - \frac{1}{2}  \dot G(x) \right)  w, 
\end{align}
Then there is a unique periodic orbit. This periodic orbit is exponentially stable.
\end{theorem}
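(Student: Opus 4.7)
The plan is to chain the two substantial results that have just been developed: Proposition~\ref{Proposition:RemoveGfromConstrains} and Theorem~\ref{LimitCycleGiesl}. The hypothesis of the theorem is precisely the ``$G$-free'' contraction $\lambda_G'<-c'$, whereas Giesl's criterion is stated in terms of the original $\lambda_G$. The proposition is designed to bridge exactly this gap, so the proof should be essentially a composition of the two statements together with one bookkeeping step.

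First, I would invoke the equivalence (2)$\Leftrightarrow$(1) of Proposition~\ref{Proposition:RemoveGfromConstrains}. Since $K$ is compact and $\lambda_G'(x)\le -c'$ on $K$, there exist a Riemannian metric $\tilde G$ on $U$ and a constant $\tilde c>0$ such that $\lambda_{\tilde G}(x)\le -\tilde c$ for every $x\in K$. This places us in the setting needed by item (2) of Theorem~\ref{LimitCycleGiesl}, modulo one subtlety: Giesl's statement requires $K$ to be invariant, whereas here we assume only \emph{positive} invariance. I would resolve this by passing to a smaller, genuinely invariant subset. For any $x_0\in K$ the $\omega$-limit set $\omega(x_0)$ is nonempty, compact, connected (because the forward orbit is precompact), fully invariant, contained in $K$ (hence equilibrium-free), and the contraction inequality $\lambda_{\tilde G}\le -\tilde c$ holds on it verbatim. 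Applying Theorem~\ref{LimitCycleGiesl} to $\omega(x_0)$ then delivers an exponentially stable periodic orbit $\gamma\subset K$.

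For uniqueness, I would argue that a second periodic orbit $\gamma'\subset K$ would contradict the contraction estimate. Using $\tilde G$ as a Finsler metric in Theorem~\ref{LewisContractionTh2} applied to a variation whose endpoints lie on $\gamma$ and $\gamma'$, the length of the connecting curve would decay exponentially along the flow in the directions transverse to $f$ (the direction along $f$ contributing nothing after averaging over the periods). Since both orbits are invariant and periodic, the Hausdorff distance between them cannot decay to zero unless it is already zero, forcing $\gamma=\gamma'$. The exponential stability of $\gamma$ is directly inherited from Theorem~\ref{LimitCycleGiesl}.

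The main obstacle I foresee is not algebraic — the chain Proposition~\ref{Proposition:RemoveGfromConstrains}~$\to$~Theorem~\ref{LimitCycleGiesl} does most of the work — but rather the careful handling of the ``positively invariant versus invariant'' mismatch and of the transverse-direction contraction argument that produces uniqueness. Both are standard in the contraction-theory tradition, but they should be spelled out explicitly in order not to tacitly assume the very conclusion being established.
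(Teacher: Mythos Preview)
Your approach is correct and is exactly the paper's: the theorem is presented there as an immediate combination of Proposition~\ref{Proposition:RemoveGfromConstrains} and Theorem~\ref{LimitCycleGiesl}, with no further argument.

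The two points you flag---the mismatch between \emph{positively invariant} in the hypothesis and \emph{invariant} in Theorem~\ref{LimitCycleGiesl}, and the fact that Theorem~\ref{LimitCycleGiesl} as restated does not explicitly assert uniqueness---are details the paper does not address. Your $\omega$-limit workaround and transverse-contraction uniqueness argument are the standard fixes and are sound (though the uniqueness sketch would need tightening in a full write-up). In Giesl's original result the stronger conclusion holds: positive invariance suffices and all of $K$ lies in the basin of attraction of the periodic orbit, which delivers uniqueness within $K$ directly. The paper is implicitly leaning on that full statement rather than on its abbreviated restatement, so your added care is genuine value rather than a deviation.
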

Notice that the contraction criterion \eqref{Eq:AlgebraicFormulationContraction} does not involve Riemannian metric $G$ in the equations of constraints. In the next section, we will reformulate \eqref{Eq:AlgebraicFormulationContraction} in terms of SOSs.

\section{Algebraic Conditions}

In this section, we will evaluate the contraction criterion \eqref{Eq:AlgebraicFormulationContraction} discussed in the previous section for formulating a numerically tractable algorithm. 

The contraction criterion $\lambda_G'$ in \eqref{Eq:AlgebraicFormulationContraction}  can be computed by means of certificates of positivity provided $f$ is a polynomial vector field as seen in the next proposition. The following notation will be instrumental $\mathbb{R} [Z] \equiv \mathbb{R} [Z_1, \hdots, Z_n]$ is the ring of polynomials with real coefficients with $n$ indeterminates, $\Sigma^2[Z] \subset \mathbb{R} [Z]$ is the set of sums of squares of polynomials. The compact set $K$ brought up in Theorem~\ref{TheoremFinalAnylyticTest}, will here be a semialgebraic set of the form $K = \{x \in \mathbb{R}^n|~q_i(x) \geq 0,~i=1,...,l\}$ for some $q_i \in \mathbb{R} [X],~i=1, \hdots, l$. We define a quadratic module $Q(q_1,\hdots,q_l)$ generated by $q_1,\hdots,q_l$ as
$$
Q(q_1,\hdots,q_l) \equiv \left\{s_0+ \sum_{i=1}^l s_i q_i|~s_j \in \Sigma^2[X], j = 0, \hdots, l\right\}.
$$

In the next proposition, we will show that the contraction condition $\lambda_G'(x) < 0, x \in K$ in Theorem~\ref{TheoremFinalAnylyticTest} can be reformulated as an algebraic condition, which can be verified by an SOS program.

\begin{proposition}
\label{Prop:SOS}
Let $G = [g_{kl}]$ be a Riemannian metric with $g_{kl} \in \mathbb{R} [X]$, and $f=(f_1, \hdots, f_n)$ be a vector field with $f_i \in \mathbb{R} [X]$. For $q_i \in \mathbb{R} [X],~i=1, \hdots, l$, let $K = \{x \in \mathbb{R}^n|~q_i(x) \geq 0,~i=1,...,l\}$. Suppose  that there is $q \in Q(q_1,\hdots,q_l)$ such that $\{x \in \mathbb R^n|~q(x) \geq 0\}$ is compact.
Then the following two conditions are equivalent
\begin{enumerate}
	\item $\lambda_G'(x)  < 0$ for all $x \in K$ with $\lambda_G$ defined in \eqref{Eq:AlgebraicFormulationContraction}.
	\item  There are an $\epsilon > 0$, polynomials $p_1,~p_2 \in \mathbb R[X,W]$ and $s_i \in \Sigma^2[X,W]$, $i = 1,...,l$ such that 
\end{enumerate}    
	\begin{align}
 - & \epsilon - W^{\mathrm T} \left( J(X) G(X) - \frac{1}{2} \dot G(X) \right) W \nonumber \\
&- \sum_{i = 1}^l s_i(X,W) q_i(X) +  p_1(X,W) (|W|^2 - 1)\nonumber\\
&+ p_2(X,W) W^{\mathrm T} f(X)  \in \Sigma^2[X,W]. \label{EQ:FinalAlgebraicCondition}
	\end{align}
\end{proposition}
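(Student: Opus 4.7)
The plan is to recognise the condition $\lambda_G'(x) < 0$ on $K$ as strict positivity of a polynomial on a compact semialgebraic set in $\mathbb{R}^{2n}$, and then invoke Putinar's Positivstellensatz in $\mathbb{R}[X,W]$ to produce the SOS certificate \eqref{EQ:FinalAlgebraicCondition}. The two equality constraints will be handled through the standard device of writing $h = 0$ as the conjunction $h \geq 0,\; -h \geq 0$: the two SOS multipliers arising in Putinar's certificate for this pair combine into a single arbitrary polynomial multiplier via $s_+ h + s_-(-h) = (s_+ - s_-) h$, which accounts for the nonsquare multipliers $p_1, p_2$ in \eqref{EQ:FinalAlgebraicCondition}.

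The easy direction (2) $\Rightarrow$ (1) is immediate: evaluate \eqref{EQ:FinalAlgebraicCondition} at any $(x,w)$ with $x \in K$, $w^{\mathrm T}w = 1$ and $w^{\mathrm T}f(x) = 0$. The equality terms vanish, each product $s_i(x,w) q_i(x)$ is nonnegative, and the left-hand side is a nonnegative number, so rearranging gives $w^{\mathrm T}(JG - \tfrac{1}{2}\dot G)w \leq -\epsilon$; taking the maximum over admissible $w$ yields $\lambda_G'(x) \leq -\epsilon < 0$.

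For the converse, set $P(X,W) \equiv -W^{\mathrm T}(J(X)G(X) - \tfrac{1}{2}\dot G(X))W$ and consider the compact semialgebraic set
$$S' = \{(x,w) \in \mathbb{R}^n \times \mathbb{R}^n : q_1(x) \geq 0,\, \ldots,\, q_l(x) \geq 0,\; |w|^2 - 1 = 0,\; w^{\mathrm T}f(x) = 0\}.$$
Compactness of $S'$ follows from compactness of $K$ (since $K \subseteq \{q \geq 0\}$) together with the fact that $|w|^2 = 1$ bounds $w$. Continuity of $\lambda_G'$ together with compactness of $K$ upgrades hypothesis (1) to $\lambda_G' \leq -c$ on $K$ for some $c > 0$, whence $P \geq c$ on $S'$. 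Introduce the module
$$M \equiv Q(q_1,\ldots,q_l) + (|W|^2 - 1)\,\mathbb{R}[X,W] + (W^{\mathrm T}f)\,\mathbb{R}[X,W]$$
in $\mathbb{R}[X,W]$. Putinar's theorem, applied to the strictly positive polynomial $P - c/2$ on $S'$, yields a decomposition $P - c/2 = s_0 + \sum_i s_i q_i + p_1(|W|^2 - 1) + p_2 W^{\mathrm T}f$ with $s_0, s_i \in \Sigma^2[X,W]$ and $p_1, p_2 \in \mathbb{R}[X,W]$; rearranging and absorbing signs into $p_1, p_2$ reproduces \eqref{EQ:FinalAlgebraicCondition} with $\epsilon = c/2$.

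The main obstacle is to justify invoking Putinar's theorem, which requires $M$ to be Archimedean in $\mathbb{R}[X,W]$. The hypothesis that some $q \in Q(q_1,\ldots,q_l)$ has compact nonnegativity set is the standard sufficient condition for $Q(q_1,\ldots,q_l)$ itself to be Archimedean in $\mathbb{R}[X]$; in particular it furnishes some $N_1$ with $N_1 - |X|^2 \in Q(q_1,\ldots,q_l)$. Since $1 - |W|^2$ lies in the ideal generated by $|W|^2 - 1$, the sum $(N_1 - |X|^2) + (1 - |W|^2) = (N_1 + 1) - |X|^2 - |W|^2$ belongs to $M$, so every polynomial in $X, W$ is bounded modulo $M$. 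This establishes the Archimedean property and unlocks Putinar's theorem, completing the argument.
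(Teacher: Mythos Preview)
Your proof is correct and follows essentially the same strategy as the paper: the easy direction is handled by evaluation, and the substantive direction $(1)\Rightarrow(2)$ is obtained by applying Putinar's Positivstellensatz in $\mathbb{R}[X,W]$ to the quadratic module generated by $q_1,\ldots,q_l$ together with $\pm(|W|^2-1)$ and $\pm W^{\mathrm T}f$, after checking that this module is Archimedean. Your verification of the Archimedean property --- deducing $N_1-|X|^2\in Q(q_1,\ldots,q_l)$ from the hypothesis and then adding the ideal element $1-|W|^2$ to obtain $(N_1+1)-|X|^2-|W|^2\in M$ --- is in fact more transparent than the paper's brief argument via the element $q\,(1-W^{\mathrm T}W)$.
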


Specifically, the existence of $q \in Q(q_1, \hdots, q_m)$ in Proposition~\ref{Prop:SOS} is assured if there is $i \in \{1, \hdots l\}$ such that $\{x \in \mathbb R^n|~q_i \geq 0\}$ is compact. Since $K$ is compact, we can add to the family of constraints an extra constraint $q_{l+1} \geq 0$, $q_{l+1} \in \mathbb{R} [X]$, such that $C \equiv \{x \in \mathbb{R}^n|~q_{l+1}(x) \geq 0 \}$ is compact and $K \subseteq C$. 

Notice also that the Riemannian metric $G$ enters the algebraic criterion \eqref{EQ:FinalAlgebraicCondition} affinely; hence, $G$ for which $\lambda_G'$ is negative on $K$ can be computed by semidefinite programming (that maximizes $\epsilon$).  

\begin{proof}
	If \eqref{EQ:FinalAlgebraicCondition} is satisfied then $\lambda_G'(x) \leq - \epsilon < 0$ for all $x \in K$. To prove the converse, notice that any polynomial $p$ can  be written in the form
	\[p = s_1 - s_2 \]
	for $s_1$ and $s_2$ sum of squares, since $p = (p+1)^2-(p^2+1)$. We observe that for a $q \in Q(q_1, \hdots, q_l)$, we have $q\left(1 - W^{\mathrm T} W \right) \in Q\left(q_1, \cdots, q_l, W^{\mathrm T} W - 1, 1 - W^{\mathrm T}W, W^{\mathrm T} f, - W^{\mathrm T} f\right)$. Furthermore, by the hypothesis, $q\left(1 - W^{\mathrm T} W \right)$ is compact. Therefore by Putinar's Positivstellensatz~\cite{Putinar93}, we conclude that 
	\begin{align*}
		- &\epsilon - W^{\mathrm T} \left( J G - \frac{1}{2} \dot G \right) W 
        \\ = &
        s_0 + \sum_{i = 1}^l s_i q_i + (1-|W|^2) (s_{l+1} - s_{l+2}) \\ 
		+ & W^{\mathrm T} f(X)  (s_{l+3} - s_{l+4}).
	\end{align*}
	for some $s_0, \hdots, s_{l+4} \in \Sigma^2[X,W]$.
Hence, 
	\begin{align*}
  - \epsilon - & W^{\mathrm T} \left( J G - \frac{1}{2} \dot G \right) W- \sum_{i = 1}^l s_i q_i + p_1 (|W|^2 - 1) \\
 + & p_2 \left<W, f \right>  \in \Sigma^2[X,W]
	\end{align*} 
	for some $s_0, \hdots, s_l  \in \Sigma^2[X,W]$, and  $p_1, p_2 \in \mathbb R[X,W]$.
	
%
	\end{proof}

Let $V(q_i) \equiv  \{x \in \mathbb{R}^n|~ q_i(x) = 0\}$. If $0$ is a regular value of $q_i$, i.e., $Dq_i(x) \equiv \frac{\partial q_i}{\partial x} (x) $  is a surjection for all $x \in V(q_i)$ then $V(q_i)$ is a smooth manifold. Consequently, by Nagumo Theorem \cite[Thm 1.2.1]{Aubin91}, we have the following result. 
\begin{lemma}
\label{Lemma_Simple_Invariance}
Let $K = \{x \in \mathbb{R}^n|~q_i(x) \geq 0,~i=1,...,l\}$ and $f$ is a vector field on $\mathbb{R}^n$. 
Suppose that for each $i \in \{1, \cdots, l\}$, $0$ is a regular value of $q_i$, and $Dq_i[f](x) \leq 0$  for all $x \in K \cap V(q_i)$ ($Dq_i[f]$ means the differential $Dq_i$ acts as a linear map on the vector field $f$). Then $K$ is a positive invariant set of $f$.
\end{lemma}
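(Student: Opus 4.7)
The plan is to apply Nagumo's Theorem, as already invoked in the statement. Nagumo's theorem asserts that a closed set $K$ is positively invariant under the flow of a locally Lipschitz vector field $f$ if and only if the subtangential condition $f(x)\in T_K(x)$ holds at every $x\in K$, where $T_K(x)$ denotes the Bouligand contingent tangent cone. Hence it suffices to verify this condition pointwise on $K$.

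For $x$ in the interior of $K$ one has $T_K(x)=\mathbb{R}^n$, so the condition is vacuous, and the substance of the proof lies at boundary points $x\in\partial K$. There I would introduce the active index set $I(x)\equiv\{i\in\{1,\ldots,l\}:q_i(x)=0\}$, which is non-empty precisely because $x\in\partial K$. The regular-value hypothesis guarantees $\nabla q_i(x)\neq 0$ for each $i\in I(x)$, so $V(q_i)$ is a smooth hypersurface near $x$. A standard computation (cf.\ \cite{Aubin91}) then yields the inclusion
\[
T_K(x)\supseteq\bigl\{v\in\mathbb{R}^n:Dq_i(x)\cdot v\geq 0\text{ for all }i\in I(x)\bigr\}.
\]
Applied with $v=f(x)$, the sign hypothesis on $Dq_i[f]$ along $K\cap V(q_i)$ — interpreted so that the orbital derivative of each active $q_i$ points into the half-space $\{q_i\geq 0\}$ — places $f(x)$ inside this linearized cone and therefore inside $T_K(x)$. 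Nagumo's Theorem then yields the positive invariance of $K$.

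The main technical obstacle is precisely the tangent-cone inclusion above at points where several constraints vanish simultaneously (``corners'' of $K$). For a single active constraint, the inclusion is immediate from the implicit function theorem applied to $q_i$ at $x$. For multiple active constraints, a constraint qualification is in principle needed to guarantee that every direction in the linearized cone is realized by a feasible curve tangent to $v$; the simplest sufficient condition is linear independence of the gradients $\{\nabla q_i(x):i\in I(x)\}$, and more generally a Mangasarian--Fromovitz condition suffices. A cleaner alternative that avoids any explicit cone computation is to argue directly by contradiction: a trajectory leaving $K$ at a first exit time $t^*$ must have $q_i(\phi(t^*))=0$ and $q_i\circ\phi$ strictly decreasing at $t^*$ for some $i$, forcing $Dq_i[f](\phi(t^*))$ to have the wrong sign at a point of $K\cap V(q_i)$ and contradicting the hypothesis.
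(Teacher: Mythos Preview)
Your approach is precisely the paper's: the lemma is stated immediately after the sentence ``Consequently, by Nagumo Theorem \cite[Thm~1.2.1]{Aubin91}, we have the following result,'' and no further proof is given. Your verification of the subtangentiality condition---including the remark about constraint qualifications at corners and the need to read the sign of $Dq_i[f]$ as pointing into $\{q_i\geq 0\}$---supplies detail that the paper omits entirely.
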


Combining Proposition~\ref{Proposition:RemoveGfromConstrains}, Theorem~\ref{TheoremFinalAnylyticTest} , Proposition~\ref{Prop:SOS}, and Lemma~\ref{Lemma_Simple_Invariance} gives the following corollary.

\begin{corollary}
\label{CombiningCorollary}
For $q_i \in \mathbb{R} [X],~i=1, \hdots, l$, let $K = \{x \in \mathbb{R}^n|~q_i(x) \geq 0,~i=1,...,l\}$. Suppose  that there is $q \in Q(q_1,\hdots,q_l)$ such that $\{x \in \mathbb R^n|~q(x) \geq 0\}$ is compact.
Let $0$ be a regular value for $q_i$, $i = 1, \hdots l$ and $Dq_i[f](x) \leq 0$ for all $x \in K \cap V(q_i)$. Suppose that \eqref{EQ:FinalAlgebraicCondition} is satisfied for an $\epsilon > 0$, polynomials $p_1,~p_2 \in \mathbb R[X,W]$ and $s_i \in \Sigma^2[X,W]$, $i = 1,...,l$.  
Then there is a unique periodic orbit. This periodic orbit is exponentially stable.
\end{corollary}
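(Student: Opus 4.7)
The plan is to assemble the four preceding results, since the corollary is essentially a concatenation of their hypotheses and conclusions: Lemma~\ref{Lemma_Simple_Invariance} supplies positive invariance of $K$, Proposition~\ref{Prop:SOS} supplies the contraction estimate on $K$, and Theorem~\ref{TheoremFinalAnylyticTest} converts these into the existence and exponential stability of a unique periodic orbit (with Proposition~\ref{Proposition:RemoveGfromConstrains} already built into the latter via the use of $\lambda_G'$).

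First, I would observe that $K$ is compact: every $x \in K$ satisfies $q_i(x) \geq 0$ for each $i$, and the existence of $q \in Q(q_1, \ldots, q_l)$ with $\{q \geq 0\}$ compact forces $K \subseteq \{q \geq 0\}$ to be bounded, while $K$ is closed as a finite intersection of closed sets. The tangency hypotheses -- that $0$ is a regular value of each $q_i$ and $Dq_i[f](x) \leq 0$ on $K \cap V(q_i)$ -- are exactly those of Lemma~\ref{Lemma_Simple_Invariance}, so $K$ is positively invariant under the flow of $f$.

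Second, I would extract the contraction estimate from the SOS certificate \eqref{EQ:FinalAlgebraicCondition}. Evaluated at any $(x,w) \in K \times \mathbb{R}^n$ with $|w|^2 = 1$ and $w^{\mathrm T} f(x) = 0$, the multiplier terms $p_1(x,w)(|w|^2 - 1)$ and $p_2(x,w)\, w^{\mathrm T} f(x)$ vanish; the terms $s_i(x,w)\, q_i(x)$ are nonnegative because $s_i$ is SOS and $q_i(x) \geq 0$ on $K$; and the whole expression is SOS, hence nonnegative. Rearranging yields $w^{\mathrm T}\bigl(J(x)G(x) - \tfrac{1}{2} \dot G(x)\bigr) w \leq -\epsilon$, and taking the maximum over admissible $w$ gives $\lambda_G'(x) \leq -\epsilon$ uniformly on $K$.

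Finally, I would invoke Theorem~\ref{TheoremFinalAnylyticTest} with $c' = \epsilon$ to obtain a unique exponentially stable periodic orbit in $K$. The step is more bookkeeping than analysis; the one point requiring care is that connectedness of $K$ and absence of equilibria in $K$, both standing hypotheses of Theorem~\ref{TheoremFinalAnylyticTest}, are not stated in the corollary and must either be read from context or added explicitly -- this is the main (and essentially only) obstacle to a fully clean assembly.
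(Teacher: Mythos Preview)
Your proposal is correct and is precisely the assembly the paper intends: the paper gives no explicit proof but merely states that the corollary follows by ``combining Proposition~\ref{Proposition:RemoveGfromConstrains}, Theorem~\ref{TheoremFinalAnylyticTest}, Proposition~\ref{Prop:SOS}, and Lemma~\ref{Lemma_Simple_Invariance}.'' Your closing remark is also on target --- the corollary as stated omits the hypotheses that $K$ be connected and contain no equilibrium, both of which Theorem~\ref{TheoremFinalAnylyticTest} requires; the paper tacitly carries these over from the earlier discussion rather than restating them.
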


Notwithstanding Corollary~\ref{CombiningCorollary} does not provide information about Floquet exponents, one can still conclude from Proposition~\ref{Proposition:RemoveGfromConstrains} that there is $c > 0$ such that  $\lambda_{G^{-1}}(x) < -c$ on $K$. Hence, all Floquet exponents except the trivial ones are less than or equal to $-c$. Importantly, for fixed $G$, the value of $c$ can be computed by means of the SOS programming. 

Nonetheless, our focus in this paper is on the contraction criterion, the Corollary~\ref{CombiningCorollary} provides the algebraic conditions for both the contraction and the invariance condition formulated in terms of certificates of positivity of polynomials, which can be verified by an SOS program.

\section{Numerical Example}

As an illustrative example of a system with an asymptotically stable
non-trivial periodic orbit, we consider the three a dimensional dynamical system given by
\begin{equation}\label{eq:lotka3d}
\begin{split}
\dot{x}=&x(1-x^2-y^2) (x+0.5)-y,\\
\dot{y}=&y(1-x^2-y^2)(x+0.5)+x, \\
\dot{z}=&-z.
\end{split}
\end{equation}
We have implemented the problem of maximizing an $\epsilon \in \mathbb{R}$ subject to the contraction condition \eqref{EQ:FinalAlgebraicCondition} as given in Proposition \ref{Prop:SOS} in YALMIP with MOSEK solver. The set $K$ is given by $K = \{x \in \mathbb R^n |~ 0.7 \leq |x| \leq 1.5\}$.
We have searched for a Riemannian metric with polynomial entries of degree 6. The problem was solved successfully and to illustrate, the computed $(1,1)$ entry of the Riemannian metric G is 
\begin{align*}
& g_{11}(x_1,x_2,x_3) = 12-3x_1+7x_2+14x_1^2-26x_1 x_2 \\
 &-40 x_2^2+ x_3^2 + 8 x_1^3 + 14 x_1^2 x_2 + 14x_1 x_2^2-42x_2^3 + \hdots \\
&+23 x_1^4 x_2^2 -4x_1^3 x_2^3 -2 x_1^4 x_3^2 + 2 x_1^3x_2 x_3^2 + x_1^2 x_2^2 x_3^2,
\end{align*}
where we have left out several terms for conciseness.

To underline the importance of checking the condition of invariance of the set $K$ (which we have not done here), two different trajectories of the system \eqref{eq:lotka3d} have been simulated. The trajectories are illustrated in Fig. \ref{fig:gieslplot}.
\begin{figure}
\centering
\includegraphics[width=\columnwidth]{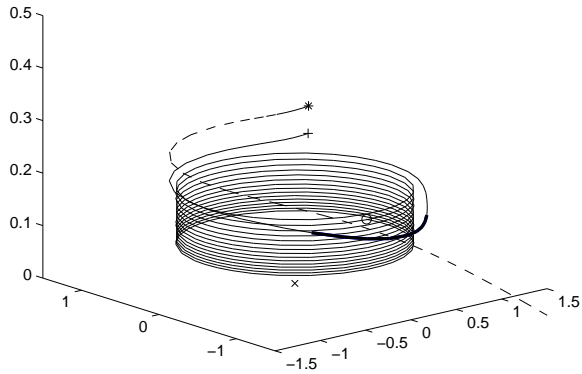}
\caption{Plot of two different trajectories of the system \eqref{eq:lotka3d}. The divergent trajectory marked with a dashed line has the initial condition $(x_0,y_0,z_0)=(1.03,1.03,0.25)$ which is marked with a $*$. The convergent trajectory marked with a solid line has the initial condition $(x_0,y_0,z_0)=(1,1,0.2)$ which is marked with a $+$. The portion of the latter trajectory which leaves the set $K$ is marked in fat and its final position is also marked with a $\circ$ in the plot. Finally, the equilibrium point at (0,0,0) is marked with a $\times$.}
\label{fig:gieslplot}
\end{figure}
As evident from Fig. \ref{fig:gieslplot} the two trajectories exhibit very different behaviour even though their initial conditions are very close (and contained in $K$). The solution marked with the dashed line escapes towards infinity while the trajectory marked with the solid line spirals towards the periodic orbit which is given by the unit circle in the $x,y$-plane. It is also worth noticing that the convergent trajectory leaves the set $K$ for a portion of time, which indicates that the basin of attraction of the periodic orbit is not fully contained in $K$.

To illustrate that the metric tensor $G$ has the desired properties, we have made two figures. In Fig. \ref{fig:eigG}, the minimum eigen value of $G$ along the convergent trajectory in Fig. \ref{fig:gieslplot} is given. As it can be seen from the figure, $G$ is positive definite along this particular trajectory. In Fig. \ref{fig:eig_contraction}, the maximal eigen value of $JG-\frac{1}{2}\dot{G}$ orthogonal to the vector field along the convergent trajectory in Fig. \ref{fig:gieslplot} is given. As it can be seen, the contraction condition is fulfilled along this particular trajectory. 
\begin{figure}
\centering
\includegraphics[width=0.8\columnwidth]{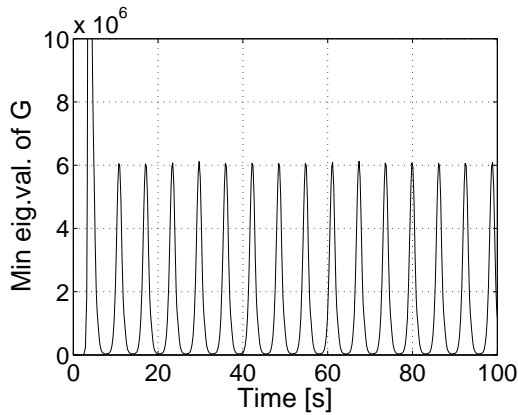}
\caption{Evaluation of the minimum eigen value of $G$ along the convergent trajectory in Fig. \ref{fig:gieslplot}.}
\label{fig:eigG}
\end{figure}
\begin{figure}
\centering
\includegraphics[width=0.8\columnwidth]{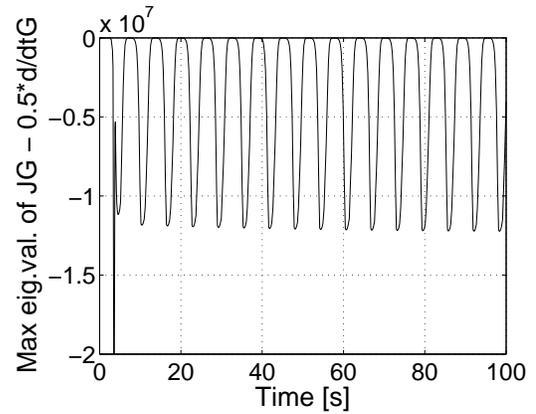}
\caption{Evaluation of the maximal eigen value of $JG-\frac{1}{2}\dot{G}$ orthogonal to the vector field along the convergent trajectory in Fig. \ref{fig:gieslplot}.}
\label{fig:eig_contraction}
\end{figure}

\section{Conclusion}
The purpose of this exposition was to find sufficient conditions for the existence and asymptotic stability of periodic orbits of nonlinear and polynomial dynamical systems. The conditions we were interested in should be numerically tractable, such that they could be checked algorithmically. We saw that in general one needs to verify two conditions: (i) invariance of some set $K\subset\mathbb{R}^n$ and (ii) the existence of some metric tensor $G$ such that the dynamical system fulfills a certain contraction property on the set $K$. Here, we focused on numerical methods for the construction of the metric tensor $G$, leaving the check of the condition (i) to existing numerical methods. We showed how the problem of constructing the metric tensor could be rewritten to a Sum-of-Squares (SOS) problem and used existing SOS software to solve the problem. The developed software tool was successfully tested on a numerical example of a system exhibiting an asymptotically stable periodic orbit. We also used the numerical example to illustrate the importance of checking both conditions (i) and (ii).

\section*{Acknowledgment}
This work was partially supported by the Danish Council for Strategic Research, under the Efficient Distribution of Green Energy (EDGE) research project.

\addtolength{\textheight}{-19.5cm}
\bibliographystyle{./IEEEtran}
\bibliography{./Contraction}

\end{document}